% ----------------------------------------------------------------
% AMS-LaTeX Paper ************************************************
% **** -----------------------------------------------------------
\documentclass{amsart}
\usepackage{graphicx}
\usepackage{float}
\usepackage{amsthm}
\usepackage{longtable}
% ----------------------------------------------------------------
\vfuzz2pt  %Don't report over-full v-boxes if over-edge is small
\hfuzz2pt % Don't report over-full h-boxes if over-edge is small
% THEOREMS -------------------------------------------------------
\newtheorem{thm}{Theorem}[section]

\newtheorem{lem}[thm]{Lemma}
\newtheorem{prop}[thm]{Proposition}
\theoremstyle{definition}

\theoremstyle{remark}

\numberwithin{equation}{section}
\newtheorem*{theorem}{\textbf{Main Theorem}}
% MATH -----------------------------------------------------------

% ----------------------------------------------------------------
\begin{document}

\title[ Characterization of Nilpotent Lie Algebras by Their Multiplier]{ Characterization of finite dimensional nilpotent Lie algebras by the dimension of their Schur multipliers, $s(L)=5$}
\author[A. Shamsaki]{Afsaneh Shamsaki}
\address{School of Mathematics and Computer Science\\
Damghan University, Damghan, Iran}
\email{Shamsaki.Afsaneh@yahoo.com}
\author[P. Niroomand]{Peyman Niroomand}
\email{niroomand@du.ac.ir, p$\_$niroomand@yahoo.com}
\address{School of Mathematics and Computer Science\\
Damghan University, Damghan, Iran}
%\thanks{0}%
%\subjclass{0}%
\thanks{\textit{Mathematics Subject Classification 2010.} 17B30}

%\thanks{0}%
%\subjclass{0}%
\keywords{Schur multiplier; nilpotent Lie algebra, capable Lie algebra}%

%\date{0}%
%\dedicatory{0}%
%\commby{0}%
% ----------------------------------------------------------------
\begin{abstract}

It is known that the dimension of the Schur multiplier of a non-abelian nilpotent Lie algebra $L$ of dimension
$n$  is equal to $\frac{1}{2}(n-1)(n-2)+1-s(L)$ for some $ s(L)\geq0 $. The structure of all nilpotent Lie algebras has been given for $ s(L) \leq 4 $ in several papers. Here, we are going to give the structure of all non-abelian nilpotent Lie algebras for $s(L)=5$.
 \end{abstract}
\maketitle
% ----------------------------------------------------------------
\section{Introduction and motivation}
Let $L$ be a finite dimensional nilpotent Lie algebra such that $L\cong F/R$ for a free Lie algebra $F$. 
Then by \cite{ba},  the Schur multiplier $\mathcal{M}(L)$ of $L$ is isomorphic to $R\cap F^2/[R,F]$.
By a result of Moneyhun in \cite{16}, there exists a non-negetive integer $t(L) $ such that 
$\dim \mathcal{M}(L)=\frac{1}{2}n(n-1)-t(L).$ 
It is a classical question to determine the structure of $L$ by looking at the dimension of  its Schur multiplier.
The answer to this problem was given  for  $t(L)\leq 8$ in \cite{2,13,12} and 
by putting some conditions on $L$ for $t(L)\leq 16$ in \cite{bo} by several authors.

From \cite{17}, when $L$ is a non-abelian nilpotent Lie algebra, the dimension of the Shur multiplier of $L$ is equal to $\frac{1}{2}(n-1)(n-2)+1-s(L) $ for some $ s(L)\geq 0 $. It not only improves the bound of Moneyhun but also let us ask the same natural question about the characterization of Lie algebras in term of size $s(L)$.
The answer to this question was given by several papers in \cite{25,29,29-} for $ s(L)\leq 4$ and for $ s(L)\leq 15$
when conditions are put on $L$ in \cite{nin}.

On the other hand, looking for instance \cite{25} shows that the characterization of 
 nilpotent Lie algebras by looking $s(L)$  causes to  classification of nilpotent Lie algebras in terms  of $ t(L) $ by a simple and a shorter way.
This paper is devoted to obtain the structure of all nilpotent Lie algebras $L$ for $t(L)=5$.

Throughout the paper, we may assume that $ L $ is a Lie algebra over an algebraically closed field of characteristic not equal to 2 and $A(n)$ and $H(m)$ are used to denote the abelian Lie algebra of dimension $n$ and the Heisenberg Lie algebra of dimension
$2m + 1$, respectively. 

For the sake of convenience  of reader some notations and terminology  from \cite{7,13,12,15}  are listed below.
\begin{center}
\begin{tabular}{lp{8cm}}
$L_{3,2}\cong H(1) $ &with a basis $\{x_{1}, x_{2}, x_{3}\}$ and the  multiplication $ [x_{1},x_{2}]$ $=x_{3} $,\\
$L_{4,3} \cong L(3,4,1,4)$ &with a basis $\{x_{1}, ..., x_{4}\}$ and the  multiplication $ [x_{1},x_{2}]$ $=x_{3}, [x_{1},x_{3}] =x_{4} $,\\
$L_{5,5}\cong L(4,5,1,6)$  &with a basis $\{ x_{1}, ..., x_{5}\}$ and the multiplication $[x_{1},x_{2}]$ $=x_{3}, [x_{1},x_{3}] =x_{5}, [x_{2},x_{4}]=x_{5} $,\\

$L_{5,6}\cong L^{\prime}(7,5,1,7)$& with a basis $ \{x_{1}, ..., x_{5}\}$  and the multiplication  $[x_{1},x_{2}]$ $=x_{3}, [x_{1},x_{3}] =x_{4}, [x_{1},x_{4}]=[x_{2},x_{3}]=x_{5} $,\\
$L_{5,7}\cong L(7,5,1,7) $ & with a basis $\{x_{1}, ..., x_{5}\}$  and the  multiplication $[x_{1},x_{2}]$ $=x_{3}, [x_{1},x_{3}] =x_{4}, [x_{1},x_{4}]=x_{5} $,\\
$ L_{5,8} \cong L(4,5,2,4)$ &with a basis $\{x_{1}, ..., x_{5}\}$  and the multiplication $[x_{1},x_{2}]=x_{4},[x_{1},x_{3}]=x_{5}$,\\
$ L_{5,9}\cong L(7,5,2,7)$ &with a basis $ \{ x_{1}, ..., x_{5}\} $ and the  multiplication  $[x_{1},x_{2}]$ $=x_{3}, [x_{1},x_{3}] =x_{4}, [x_{2},x_{3}]=x_{5} $,\\
$ L_{6,10} $ &with a basis $\{x_{1}, ..., x_{6}\}$  and the multiplication $[x_{1},x_{2}]=x_{3},[x_{1},x_{3}]=x_{6}, [x_{4},x_{5}]=x_{6}$,\\
$ L_{6,22}(\varepsilon)$ &with a basis $\{x_{1}, ..., x_{6}\}$ and the multiplication $[x_{1},x_{2}]=x_{5},[x_{1},x_{3}]=x_{6}, [x_{2},x_{4}]= \varepsilon x_{6},[x_{3},x_{4}]=x_{5}, \varepsilon \in \mathbb{F}$,\\

  $ L_{1}\cong 27B $ &with a basis $ \{x_{1}, ..., x_{7}\}$ and the multiplication $ [x_{1},x_{2}]=[x_{3},x_{4}]=x_{6},[x_{1},x_{5}]=[x_{2},x_{3}]=x_{7}$,\\
 $ L_{2}\cong 27A$ &with a basis $\{x_{1}, ..., x_{7}\}$ and the multiplication $[x_{1},x_{2}]=x_{6},[x_{1},x_{4}]=x_{7},[x_{3},x_{5}]=x_{7}$,\\
 $ 157 $ &with a basis $\{x_{1}, ..., x_{7}\}$ and the multiplication $[x_{1},x_{2}]=x_{3},[x_{1},x_{3}]=[x_{2},x_{4}]=[x_{5},x_{6}]=x_{7}$\\
  $ 37B $ & with a basis $\{x_{1}, ..., x_{7}\}$ and the multiplication $[x_{1},x_{2}] =x_{5}, [x_{2},x_{3}] =x_{6}, [x_{3},x_{4}] =x_{7}$,\\
  $ 37C $ &with a basis $\{x_{1}, ..., x_{7}\}$ and the multiplication $[x_{1},x_{2}] =[x_{3},x_{4}]=x_{5}, [x_{2},x_{3}] =x_{6}, [x_{2},x_{4}] =x_{7}$,\\
    $ 37D $ &with a basis $\{x_{1}, ..., x_{7}\}$ and the multiplication $[x_{1},x_{2}] =[x_{3},x_{4}]=x_{5}, [x_{1},x_{3}] =x_{6}, [x_{2},x_{4}] =x_{7}$.
\end{tabular}
\end{center}
 
\begin{theorem}
{\em  Let $L$ be a non-abelian $n$-dimensional nilpotent Lie algebra. Then $ s(L)=5 $  if and only if $L$  is isomorphic to one of the Lie algebras $L(4,5,2,4)\oplus A(4)$, $L(3,4,1,4)\oplus A(3)$, $L(4,5,1,6)\oplus A(2)$,  $L_{6,22}(\varepsilon)\oplus A(2)$, $ L_{6,26}\oplus A(1) ,$ $ L_{6,10} $, $L_{6,23}$,  $ L_{6,25} $, $ 37B $, $ 37C $ or  $ 37D $.}
 \end{theorem}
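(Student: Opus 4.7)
The plan is to prove the two directions separately, with the substantive content in the converse. The forward direction is a direct verification: for each Lie algebra $L$ in the list I would compute $\dim\mathcal{M}(L)$ from the given multiplication table and substitute into the defining identity $\dim\mathcal{M}(L)=\tfrac12(n-1)(n-2)+1-s(L)$ to confirm $s(L)=5$. For the summands of the form $K\oplus A(m)$, the standard Kunneth-type decomposition
\[
\mathcal{M}(K\oplus A(m))\cong\mathcal{M}(K)\oplus\mathcal{M}(A(m))\oplus\bigl((K/K^{2})\otimes A(m)\bigr)
\]
reduces the computation to the multipliers of the indecomposable factors, and for the six- and seven-dimensional algebras $L_{6,10}, L_{6,23}, L_{6,25}, L_{6,26}, 37B, 37C, 37D$ the multiplier dimensions are already recorded in \cite{12,13}.

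For the converse, suppose $s(L)=5$ and set $n=\dim L$. My first step is to establish the a priori bounds $\dim L^{2}\leq 4$ and $n\leq 9$, using the basic upper bounds on $\dim\mathcal{M}(L)$ together with the refinement of Moneyhun's inequality from \cite{17}. Writing $L=K\oplus A(m)$ where $K$ has no abelian direct summand and expressing $s(L)$ in terms of $s(K)$, $\dim K$, $\dim K/K^{2}$ and $m$ then translates the hypothesis into a small number of arithmetic possibilities for the invariants of the stem factor $K$. When $m\geq 1$, this reduces, via the earlier classifications for $s(K)\leq 4$ in \cite{25,29,29-}, to checking a short list of candidates, which yields the direct-sum entries of the theorem. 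When $m=0$, the stem $L$ has dimension at most nine, and I would walk through the classification of nilpotent Lie algebras of dimension $\leq 7$ in \cite{7,15} (and the analogous lists in dimensions eight and nine) and retain only those with $s=5$.

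The hardest step will be the stem case in dimensions six and seven, which contains many non-isomorphic algebras. For each candidate one has to compute $\dim\mathcal{M}$ precisely; the central-extension inequality $\dim\mathcal{M}(L)\leq\dim\mathcal{M}(L/Z)+\dim(L/L^{2})\cdot\dim Z$ applied to a one-dimensional central ideal $Z\subseteq L^{2}$ is the main tool for eliminating candidates quickly, and the remaining instances should be finished by presenting $L$ as $F/R$ and computing $R\cap F^{2}/[R,F]$ directly. The higher-dimensional stem cases ($n=8,9$) need to be excluded by sharper dimension inequalities, so that above dimension seven the only contributions appear as direct sums with an abelian factor, namely $L_{6,22}(\varepsilon)\oplus A(2)$ and $L(4,5,2,4)\oplus A(4)$. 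Assembling the outputs of these case distinctions should produce exactly the eleven families listed in the statement.
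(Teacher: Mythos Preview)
Your stem decomposition $L=K\oplus A(m)$ together with the identity $s(L)=s(K)+(\dim K^{2}-1)\,m$ is a legitimate alternative organising principle and disposes of the case $m\ge1$ cleanly by reduction to the known classifications for $s\le4$; the paper never isolates a maximal abelian summand but instead works throughout via capability and the structure of $L/Z(L)$, so your $m\ge1$ route is a genuine shortcut there.

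The gap is the stem case $m=0$. The a priori bound $n\le9$ does not follow from the inequalities you cite: with $\dim L^{2}=2$ the bound of \cite{25} gives only $s(L)\ge1$, and with $\dim L^{2}=3$ only $s(L)\ge3$, so neither constrains $n$. Nor is there any classification of nilpotent Lie algebras in dimensions $8$ or $9$ to ``walk through''; no such list exists in the literature. Your appeal to unspecified ``sharper dimension inequalities'' is precisely where the substantive work sits, and the paper does not close it with a single estimate. For $\dim L^{2}=2$ it combines the classification of \emph{capable} algebras with two-dimensional derived subalgebra from \cite{26,27} (each is a fixed algebra of dimension at most $7$ plus an abelian factor, so a stem is small) with the non-capable bound $n-3<s(L)$ from \cite{29-}. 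For $\dim L^{2}=3$ it performs a case split on $\dim Z(L)$ and on whether $Z(L)\subseteq L^{2}$; the subcase $Z(L)=L^{2}$ (generalised Heisenberg of rank~$3$) requires a separate lemma estimating $\dim\ker g$ in the exact sequence of \cite{N} to force $n\le7$, and the subcases $\dim Z(L)\le2$ are handled by passing to $L/Z(L)$ and invoking the earlier $s\le3$ results. None of these ingredients appear in your plan, and without them there is no way to exclude stem algebras of dimension $8$ or $9$.
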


We state some results without proof and refer the reader to see \cite{24, 25, 27-,29-}.

  \begin{prop}$($See   \cite[Proposition 2.10]{26}$)$  \label{15}
The Schur multiplier of Lie algebras $L_{6,22}(\varepsilon)$, $L_{5,8}$, $ L_{1} $ and $L_{2}$ are abelian Lie algebras of dimension $8,6,9$ and $10$, respectively.
\end{prop}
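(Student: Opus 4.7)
The Schur multiplier of any Lie algebra is automatically abelian: for $L=F/R$ one has $[R\cap F^2,R\cap F^2]\subseteq[R,R]\subseteq[F,R]$, so the induced bracket on $\mathcal{M}(L)=(R\cap F^2)/[F,R]$ vanishes. Hence the statement amounts to four dimension equalities, and my plan is to verify each by a direct application of Hopf's formula, exploiting the fact that all four algebras are nilpotent of class two.

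Let $L$ be any of the four algebras, put $d=\dim(L/L^2)$ (so $d=3,4,5,5$ for $L_{5,8}$, $L_{6,22}(\varepsilon)$, $L_1$, $L_2$, respectively), and let $F$ be the free Lie algebra on $d$ generators. Class-two nilpotency forces $F^3\subseteq R$ and $R=\widetilde R+F^3$, where $\widetilde R\subseteq F^2$ lifts the commutator-relation subspace $\bar R=\ker\bigl(\phi\colon\Lambda^2(L/L^2)\to L^2\bigr)$; thus $\dim\bar R=\binom{d}{2}-\dim L^2$. Splitting $R/[F,R]$ by homogeneous weight in $F$ (weights $\geq 4$ are killed because $F^4=[F,F^3]\subseteq[F,R]$) gives
\[
\dim\mathcal{M}(L)=\dim\bar R+\dim(F^3/F^4)-\mathrm{rank}\bigl(\psi\colon L/L^2\otimes\bar R\to F^3/F^4\bigr),
\]
with $\psi(v\otimes r)=[v,r]$. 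Witt's formula yields $\dim(F^3/F^4)=\tfrac{1}{3}(d^3-d)$, and the kernel of the full bracket $L/L^2\otimes\Lambda^2(L/L^2)\to F^3/F^4$ is the span $J$ of the Jacobi elements $e_i\otimes(e_j\wedge e_k)-e_j\otimes(e_i\wedge e_k)+e_k\otimes(e_i\wedge e_j)$. Combining these pieces,
\[
\dim\mathcal{M}(L)=(1-d)\dim\bar R+\tfrac{1}{3}(d^3-d)+\dim\bigl((L/L^2\otimes\bar R)\cap J\bigr).
\]

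Since $(L/L^2\otimes\bar R)\cap J$ is the kernel of the restriction of $\mathrm{id}\otimes\phi\colon L/L^2\otimes\Lambda^2(L/L^2)\to L/L^2\otimes L^2$ to $J$, its dimension is the nullity of a single $\binom{d}{3}\times d\dim L^2$ matrix whose rows are the images of the Jacobi elements under $\mathrm{id}\otimes\phi$. For $L_{5,8}$ the only Jacobi element on three generators has nonzero image, so the intersection is trivial and the formula returns $6$. For $L_{6,22}(\varepsilon)$ (treating $\varepsilon=0$ and $\varepsilon\ne 0$ in parallel) the four Jacobi images are linearly independent in $L/L^2\otimes L^2$, again giving intersection $0$ and $\dim\mathcal{M}=8$. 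For $L_1$ the element $J_{245}$ has zero image while the remaining nine Jacobi images are linearly independent, giving intersection $1$ and $\dim\mathcal{M}=9$; for $L_2$ both $J_{234}$ and $J_{245}$ have zero image while the other eight images are linearly independent, giving intersection $2$ and $\dim\mathcal{M}=10$.

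The main technical obstacle is certifying the claimed ranks in the cases $L_1$ and $L_2$: a single overlooked linear dependence among the nonzero Jacobi images would enlarge the intersection and change the Schur-multiplier dimension. The verification is routine Gaussian elimination on a $10\times 10$ matrix, and it is convenient to organise the calculation by $L^2$-coordinate (the two basis vectors $x_6$ and $x_7$) to make the block structure of the images transparent.
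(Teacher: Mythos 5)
Your argument is correct, but be aware that the paper itself contains no proof of this proposition: it is imported verbatim from Proposition 2.10 of the cited Niroomand--Johari--Parvizi paper (reference \cite{26} in the bibliography), so there is no internal argument to compare against and your derivation has to stand on its own. It does. The graded Hopf-formula bookkeeping for a class-two algebra is set up correctly: with a minimal presentation one has $R=\widetilde R+F^3$ and $[F,R]=[F,\widetilde R]+F^4$, the weight-$\ge 4$ part dies, and the identification of $\ker\psi$ with $(L/L^2\otimes\bar R)\cap J$ is legitimate because $L/L^2\otimes\bar R$ is exactly $\ker(\mathrm{id}\otimes\phi)$ while $J$, of dimension $\binom{d}{3}$ and spanned by the Jacobi elements, is exactly the kernel of the bracket $V\otimes\Lambda^2V\to F^3/F^4$. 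I checked the four evaluations: the main term $(1-d)\dim\bar R+\tfrac13(d^3-d)$ gives $6,8,8,8$, and the correction terms are $0,0,1,2$ as you claim --- for $L_1$ only $J_{245}$ has zero image and the remaining nine images are independent (most are single basis tensors $e_i\otimes x_k$, and the three genuine two-term sums reduce against these), while for $L_2$ exactly $J_{234}$ and $J_{245}$ vanish and the other eight images are independent; the independence claims you defer to ``routine Gaussian elimination'' are in fact nearly immediate once the images are written out, so this is a forgivable ellipsis rather than a gap. Conceptually your route is a hands-on version of the exact sequence the paper records as Theorem \ref{16}: your correction term $\dim\bigl((L/L^2\otimes\bar R)\cap J\bigr)$ measures the part of $\ker g$ coming from Jacobi relations. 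What your presentation buys is that each case reduces to the rank of one small explicit matrix; what the paper's citation buys is brevity.
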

A Lie algebra $ L $ is called capable provided that $ L\cong H/Z(H) $ for a Lie algebra $ H $. From \cite[Definition 1.4]{28}, $  Z^{*}(L) $ is used to the  denote the epicenter of $L$. The importance of  $  Z^{*}(L) $ is due to the fact that $ L $ is capable if and only if $  Z^{*}(L)=0 $. Another notion having relation to the capability is the concept of the exterior center of a Lie algebra $ Z^{\wedge}(L) $ which is introduced in \cite{24}. It is known that from \cite[Lemma 3.1]{24}$)$, $ Z^{*}(L)=Z^{\wedge}(L) $.
 \begin{lem}$($See \cite[Corollary  2.3]{29-}$)$ \label{23}
    Let $L$ be a  non-capable nilpotent  Lie algebra of dimension $n$ such that $ \dim L^{2}\geq 2 $. Then
\begin{equation*}
n-3<s(L).
\end{equation*}

\end{lem}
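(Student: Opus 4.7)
The plan is to exploit the non-capability hypothesis through the epicenter $Z^*(L)$, pick a single nonzero central element of it, pass to the quotient, and then apply the standard Niroomand upper bound on the Schur multiplier of a non-abelian nilpotent Lie algebra.

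In detail, since $L$ is non-capable we have $Z^*(L) \neq 0$, so by the identification $Z^*(L)=Z^{\wedge}(L)$ from \cite[Lemma 3.1]{24} there is a nonzero $z \in Z^{\wedge}(L)$. I would set $K := L/\langle z \rangle$, a nilpotent Lie algebra of dimension $n-1$. The first thing to verify is that $K$ is still non-abelian: since $K^{2} = (L^{2}+\langle z\rangle)/\langle z\rangle$, one has $\dim K^{2} \geq \dim L^{2}-1 \geq 1$ by the hypothesis $\dim L^{2}\geq 2$, regardless of whether $z$ happens to lie in $L^{2}$ or not.

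The central step is to compare $\mathcal{M}(L)$ with $\mathcal{M}(K)$. The five-term-type exact sequence associated with the central extension $0 \to \langle z\rangle \to L \to K \to 0$ reads
\begin{equation*}
\mathcal{M}(L) \longrightarrow \mathcal{M}(K) \longrightarrow \langle z\rangle \cap L^{2} \longrightarrow 0,
\end{equation*}
and the characterization of the exterior/epicenter (as used in \cite{24,28}) states that the leftmost map is injective precisely when $\langle z\rangle \subseteq Z^{\wedge}(L)$. Hence we obtain a short exact sequence and in particular $\dim \mathcal{M}(L) \leq \dim \mathcal{M}(K)$. This injection is the one non-trivial ingredient; I expect it to be the main obstacle in the sense that it is really the content imported from \cite{24}, and every other step is formal.

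Finally, since $K$ is non-abelian nilpotent of dimension $n-1$, the Niroomand bound $\dim \mathcal{M}(K) \leq \tfrac{1}{2}(n-2)(n-3)+1$ applies. Combining with the previous inequality and the defining identity $\dim \mathcal{M}(L) = \tfrac{1}{2}(n-1)(n-2)+1-s(L)$ yields
\begin{equation*}
\tfrac{1}{2}(n-1)(n-2)+1-s(L) \;\leq\; \tfrac{1}{2}(n-2)(n-3)+1,
\end{equation*}
which rearranges to $s(L) \geq \tfrac{1}{2}(n-2)\bigl[(n-1)-(n-3)\bigr] = n-2$, strictly larger than $n-3$, as required.
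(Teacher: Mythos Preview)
Your argument is correct: picking $0\neq z\in Z^{*}(L)=Z^{\wedge}(L)$, using that $\mathcal{M}(L)\hookrightarrow \mathcal{M}(L/\langle z\rangle)$ for central ideals contained in the epicenter, and then applying Niroomand's bound \cite{17} to the $(n-1)$-dimensional non-abelian quotient gives $s(L)\geq n-2>n-3$ exactly as you wrote; the verification that $L/\langle z\rangle$ stays non-abelian via $\dim L^{2}\geq 2$ is also fine.

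There is nothing to compare against in the present paper, since the lemma is only quoted from \cite[Corollary~2.3]{29-} without proof; your proof is precisely the natural argument one expects behind that corollary (epicenter plus the injectivity criterion for $\mathcal{M}(L)\to\mathcal{M}(L/N)$ plus the bound from \cite{17}), so it is safe to say it matches the intended approach.
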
Let $\otimes_{mod}$ be used to denote the operator of usual tensor product of Lie algebras. Then
\begin{thm}$($See \cite[Theorem 2.1]{N}$)$ \label{16}
Let $ L $ be a finite dimensional nilpotent Lie algebra non-abelian Lie algebra of class two. Then 
\begin{equation*}
0\rightarrow \ker g  \rightarrow L^{2}\otimes_{mod}L^{ab}\xrightarrow{ \; g \;} \mathcal{M}(L) \rightarrow \mathcal{M}(L^{ab})\rightarrow L^{2} \rightarrow 0
\end{equation*}
is exact, in where \\
$ g : x\otimes (z+L^{2}) \in  L^{2}\otimes_{mod}L^{ab} \mapsto [\overline{x},\overline{z}]+[R, F] \in \mathcal{M}(L)=R\cap F^{2}/[R, F]$, $ \pi (\overline{x}+R)=x $ and $ \pi (\overline{z}+R)=z$. Moreover, the subalgebra $ K=\langle [x, y]\otimes (z+L^{2})+[z,x]\otimes (y+L^{2})+[y,z]\otimes (x+L^{2})\mid x,y,z \in L\rangle $ is contained in $\ker g $.
\end{thm}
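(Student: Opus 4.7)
The plan is to recognize this as a refinement of the standard Stallings/Hopf five-term exact sequence applied to the central extension $0 \to L^{2} \to L \to L^{ab} \to 0$, supplemented by an explicit description of the leftmost map $g$ and a direct verification of the Jacobi containment via the free Lie algebra.

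First I would fix a free presentation $L \cong F/R$, so that $\mathcal{M}(L) = (R \cap F^{2})/[R,F]$ and $L^{ab} = F/(F^{2}+R)$. Because $L$ has nilpotency class two, $L^{2}$ lies in $Z(L)$, so the quotient map $L \to L^{ab}$ is a central extension with kernel $L^{2}$. The Stallings/Hopf sequence for central extensions of Lie algebras then yields
\begin{equation*}
L^{ab}\otimes L^{2} \xrightarrow{\; g \;} \mathcal{M}(L) \to \mathcal{M}(L^{ab}) \to L^{2}\cap L^{2} \to L^{ab} \to (L^{ab})^{ab} \to 0.
\end{equation*}
The connecting map $L^{2} \to L^{ab}$ is induced by the inclusion followed by the projection and is therefore zero, while $(L^{ab})^{ab} = L^{ab}$, so the sequence truncates to the desired four right-hand terms. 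Prepending $\ker g$ yields the asserted exact sequence, and the symmetry of $\otimes_{mod}$ lets us write the leftmost term as $L^{2}\otimes_{mod}L^{ab}$.

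Next I would identify $g$ with the map in the statement. Given $x \in L^{2}$ and $z+L^{2} \in L^{ab}$, choose lifts $\overline{x} \in F^{2}$ and $\overline{z} \in F$. Because $L^{2} \subseteq Z(L)$ we have $[x,z]=0$ in $L$, so $[\overline{x},\overline{z}] \in R\cap F^{2}$, and one sets $g(x\otimes(z+L^{2})) := [\overline{x},\overline{z}]+[R,F]$. The well-definedness is the delicate point: altering $\overline{x}$ or $\overline{z}$ by elements of $R$ changes $[\overline{x},\overline{z}]$ by an element of $[R,F]$; and factoring through the second variable modulo $L^{2}$ uses $[L^{2},L^{2}]=0$ (valid because the class is two), which forces the commutator arising from replacing $\overline{z}$ by $\overline{z}+z_{1}$ with $z_{1} \in F^{2}+R$ to lie in $[R,F]$. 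Exactness at $\mathcal{M}(L)$ then follows because $\ker\bigl(\mathcal{M}(L)\to\mathcal{M}(L^{ab})\bigr)$ is precisely the subspace of $(R\cap F^{2})/[R,F]$ represented by brackets $[\overline{x},\overline{z}]$ with $\overline{x} \in F^{2}$, which is the image of $g$.

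For the Jacobi containment $K \subseteq \ker g$, take arbitrary $x,y,z \in L$ with lifts $\overline{x},\overline{y},\overline{z} \in F$. Then $[x,y],[z,x],[y,z] \in L^{2}$ admit the canonical lifts $[\overline{x},\overline{y}],[\overline{z},\overline{x}],[\overline{y},\overline{z}] \in F^{2}$, and applying $g$ to the stated generator produces
\begin{equation*}
\bigl[[\overline{x},\overline{y}],\overline{z}\bigr] + \bigl[[\overline{z},\overline{x}],\overline{y}\bigr] + \bigl[[\overline{y},\overline{z}],\overline{x}\bigr] + [R,F],
\end{equation*}
which vanishes by the Jacobi identity in the free Lie algebra $F$. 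The main obstacle I expect is the well-definedness of $g$, specifically the verification that it descends to $L^{2}\otimes_{mod}L^{ab}$ (rather than merely to the usual tensor product); this requires the commutator calculation in $(R\cap F^{2})/[R,F]$ that exploits $[L^{2},L^{2}]=0$. Once $g$ is set up, the Stallings argument gives exactness on the right, the kernel characterization is definitional, and the Jacobi step is a one-line computation.
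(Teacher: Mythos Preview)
Your argument is essentially correct and follows the standard route (combine the five-term Hopf sequence for the central extension $0\to L^{2}\to L\to L^{ab}\to 0$ with the Ganea-type map $L^{2}\otimes L^{ab}\to\mathcal{M}(L)$, then check well-definedness of $g$ and the Jacobi identity in $F$). Note, however, that the paper itself gives \emph{no} proof of this theorem: it is merely quoted from \cite[Theorem~2.1]{N}, so there is no ``paper's own proof'' to compare against. One small sharpening: in your well-definedness check, the reason $[\overline{x},z_{1}]\in[R,F]$ when $\overline{x}\in F^{2}$ and $z_{1}\in F^{2}$ is not literally ``$[L^{2},L^{2}]=0$'' but rather $F^{3}\subseteq R$ (class two), which via the Jacobi identity in $F$ gives $[F^{2},F^{2}]\subseteq [F^{3},F]\subseteq[R,F]$; you should make that step explicit.
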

\section{the proof of main theorem}
We begin with the following lemma that is easily proven.
\begin{lem}\label{pn}
 There is no $ n$-dimensional nilpotent Lie algebra with $ s(L)=5 $, when
 \begin{itemize}
  \item[(i)] $ \dim L^{2}\geq 4 $;
 \item[(ii)]$ \dim L^{2}=1 $.
 \end{itemize}
 \end{lem}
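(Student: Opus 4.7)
The plan is to treat the two cases independently, since they correspond to structurally distinct regimes for $L$.

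For (ii), assume $\dim L^{2}=1$. Then the Lie bracket factors through an alternating bilinear form $B$ on $L/L^{2}$ with values in the one-dimensional space $L^{2}$. A symplectic basis argument (identifying the radical of $B$ with $Z(L)/L^{2}$) forces $L$ to split as $L \cong H(m) \oplus A(n-2m-1)$ for some integer $m \geq 1$. I would then invoke the standard direct-sum formula
\[
\dim \mathcal{M}(L_{1} \oplus L_{2}) = \dim \mathcal{M}(L_{1}) + \dim \mathcal{M}(L_{2}) + \dim L_{1}^{ab}\cdot \dim L_{2}^{ab}
\]
together with the known values $\dim \mathcal{M}(H(1))=2$ and $\dim \mathcal{M}(H(m)) = 2m^{2}-m-1$ for $m \geq 2$. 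Substituting into $s(L)=\tfrac{1}{2}(n-1)(n-2)+1-\dim \mathcal{M}(L)$ and letting $k=n-2m-1$, a short direct expansion gives $s(L)=0$ when $m=1$ and $s(L)=2$ when $m \geq 2$. In particular the value $5$ is never attained, so (ii) holds.

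For (i), I would appeal to the sharper upper bound for the Schur multiplier of a non-abelian nilpotent Lie algebra established by Niroomand and his collaborators,
\[
\dim \mathcal{M}(L) \leq \tfrac{1}{2}(n+m-2)(n-m-1)+1, \qquad m = \dim L^{2}.
\]
Comparing with $\dim \mathcal{M}(L)=\tfrac{1}{2}(n-1)(n-2)+1-s(L)$ and using the elementary identity $(n-1)(n-2)-(n+m-2)(n-m-1)=m(m-1)$ yields at once the clean estimate
\[
s(L) \geq \binom{m}{2}.
\]
For $m=\dim L^{2}\geq 4$ this forces $s(L) \geq 6$, which is incompatible with $s(L)=5$ and finishes (i).

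The main obstacle is not in the arguments themselves but in citing the correct sharper bound for (i); the naive Moneyhun-type inequality $\dim \mathcal{M}(L) \leq \tfrac{1}{2}n(n-1)$ is far too weak to conclude anything when $\dim L^{2}\geq 4$. Once the improved bound is available, (i) collapses to a single line of arithmetic, and (ii) reduces to the classical description of Lie algebras with one-dimensional commutator subalgebra together with a Schur multiplier computation for $H(m)\oplus A(k)$. This is presumably the reason the authors regard the lemma as easily proven.
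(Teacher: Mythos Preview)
Your proposal is correct and follows essentially the same approach as the paper: for (i) both invoke the Niroomand--Russo bound $\dim\mathcal{M}(L)\le\tfrac12(n+m-2)(n-m-1)+1$ to obtain a contradiction (your reformulation $s(L)\ge\binom{m}{2}$ is just a cleaner packaging of the same inequality), and for (ii) both use the classification $L\cong H(m)\oplus A(n-2m-1)$ followed by the computation giving $s(L)\in\{0,2\}$. The only difference is that you spell out the multiplier computations where the paper simply cites \cite[Lemma 3.3]{17} and \cite[Corollary 2.5]{29}.
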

 \begin{proof}
 \item[(i)]Let $ L $ be a nilpotent  Lie algebra such that $m=\dim L^{2} \geq  4$ and $s(L)=5$.  \cite[Theorem 3.1]{25} and our assumption imply that
 \begin{equation*}
  \frac{1}{2}(n-1)(n-2)-4= \dim \mathcal{M}(L)\leq \frac{1}{2}(n+m-2)(n-m-1)+1 \leq \frac{1}{2}(n+2)(n-5)+1.
 \end{equation*}
 It is a contradiction.
 \item[(ii)] By contrary. Let $L$ be a Lie algebra such that $ \dim L^{2}=1 $ and $ s(L)=5 $. Then by using  \cite[Lemma 3.3]{17}, $ L\cong H(m)\oplus A(n-2m-1)$ for some $m\geq 1$. Looking  \cite[Corollary 2.5]{29} shows that $s(L)=0$ or $s(L)=2$, when  $m=1$ or $m\geq 2$,  respectively. It is a contradiction. Hence the result follows.
 \end{proof}

By using Lemma \ref{pn}, we may assume that a  nilpotent Lie algebras $L$ with $s(L)=5$ has $2\leq\dim L^2\leq 3$.
First assume that $\dim L^2=2$.
 \begin{lem}\label{1*}
Let $ L $ be an $n$-dimensional non-capable nilpotent Lie algebra of dimension at most $7$ and $ \dim L^{2}=2 $. Then $ L$ is isomorphic to one of the Lie algebras $L_{6,10} $,  $ L_{2} $ or $ 157 $. Moreover,  $ s(L_{6,10})=5   $ and  $ s(L_2)=s(157)=  6 $.
\begin{proof}
 The proof is similar to \cite[Theorem 2.6]{29-}. 
\end{proof}
\end{lem}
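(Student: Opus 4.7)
The plan follows the approach of \cite[Theorem 2.6]{29-}: proceed by enumeration over the classification of low-dimensional nilpotent Lie algebras, and filter by the capability and Schur multiplier conditions.

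First, for each $n \in \{4,5,6,7\}$ I would extract from the classification of $n$-dimensional nilpotent Lie algebras (as found in the sources \cite{7,13,12,15}) those with $\dim L^{2}=2$. The hypothesis $n\leq 7$ limits us to a finite and already tabulated list of candidates, so this step is essentially bookkeeping.

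Next, for each candidate $L$ I would compute the epicenter $Z^{*}(L)$ using the identification $Z^{*}(L)=Z^{\wedge}(L)$ recalled in the introduction, and discard the capable ones (those with $Z^{*}(L)=0$). For candidates possessing a direct abelian summand, non-capability is automatic; for the rest the check amounts to spotting a central element forced into $Z^{*}(L)$ by a structural feature of the defining relations (for instance a Jacobi-type redundancy of the form $[x_i,x_j]=[x_k,x_l]=z$ that places $z$ in $Z^{\wedge}(L)$).

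Then, for each non-capable candidate $L$, I would compute $\dim\mathcal{M}(L)$ and read off $s(L)=\tfrac12(n-1)(n-2)+1-\dim\mathcal{M}(L)$. When $L$ is of nilpotency class two, Theorem \ref{16} provides the exact sequence
\begin{equation*}
0\to\ker g\to L^{2}\otimes_{mod}L^{ab}\xrightarrow{g}\mathcal{M}(L)\to\mathcal{M}(L^{ab})\to L^{2}\to 0,
\end{equation*}
from which $\dim\mathcal{M}(L)$ follows once the contribution of the Jacobi subalgebra $K\subseteq\ker g$ has been accounted for. In classes higher than two, one instead proceeds by direct computation from a free presentation $L=F/R$ using $\mathcal{M}(L)=(R\cap F^{2})/[R,F]$.

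The main obstacle is the length of the case analysis, but most of the Schur multiplier dimensions needed are already available in \cite{25,29,29-,nin}, so the real work reduces to identifying $L_{6,10}$, $L_{2}$ and $157$ as the only survivors of the capability filter in the given dimension/derived-subalgebra range, together with a confirmation of the recorded values $s(L_{6,10})=5$ and $s(L_{2})=s(157)=6$.
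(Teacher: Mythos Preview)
Your overall strategy---enumerate the nilpotent Lie algebras of dimension at most $7$ with $\dim L^{2}=2$ from the existing classifications, discard the capable ones, and compute $s(L)$ for the survivors---is exactly the approach intended by the reference to \cite[Theorem~2.6]{29-}. So the plan is on target.

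However, one of your shortcut heuristics is simply false and would wreck the filtering step. You write that ``for candidates possessing a direct abelian summand, non-capability is automatic.'' The opposite is closer to the truth here: in this very paper (see the proof of the theorem immediately following the lemma), the \emph{capable} Lie algebras with $\dim L^{2}=2$ are precisely $L_{6,22}(\varepsilon)\oplus A$, $L_{5,8}\oplus A$, $L_{1}\oplus A$ (class two) and $L_{4,3}\oplus A(n-4)$, $L_{5,5}\oplus A(n-5)$ (class three), all of which carry a direct abelian summand. The non-capable survivors $L_{6,10}$, $L_{2}$, $157$ are indecomposable. Your second heuristic---that a relation $[x_{i},x_{j}]=[x_{k},x_{l}]=z$ forces $z\in Z^{\wedge}(L)$---is equally unreliable: $L_{6,22}(\varepsilon)$ has $[x_{1},x_{2}]=[x_{3},x_{4}]=x_{5}$ yet is capable. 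To carry out the capability filter correctly you must actually compute $Z^{\wedge}(L)$ (or invoke the structural results of \cite{26,27}) for each candidate rather than rely on these shortcuts; once that is done, the rest of your outline (reading off $\dim\mathcal{M}(L)$ from the cited tables and then $s(L)$) goes through.
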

 \begin{thm}
 Let $L$ be an $n$-dimensional nilpotent Lie algebra with $s(L)=5$ and $\dim L^{2}=2$. Then $L$ is isomorphic to one of the Lie algebras $L(4,5,2,4)\oplus A(4), L(3,4,1,4)\oplus A(3)$, $L(4,5,1,6)\oplus A(2)$,  $L_{6,22}(\varepsilon)\oplus A(2)$ or $ L_{6,10} $.
 \begin{proof}
 Sine $\dim L^2=2$, $L$ is nilpotent of class two or three. Let $ L $ be a Lie algebra of nilpotency class two. If $ L $  is a capable Lie algebra, then it should be isomorphic to one of the Lie algebras $ L_{6,22}(\varepsilon)\oplus A $, $ L_{5,8}\oplus A $ or $ L_{1}\oplus A $,  for an abelian Lie algebra $ A $ by using \cite[Corollary 2.13]{26}.

 Case (i). Let $L\cong L_{6,22}(\varepsilon)\oplus A$.  Proposition \ref{15} implies $\dim M(L_{6,22}(\varepsilon))=8 $. Since $5=s(L)=\frac{1}{2}(n-1)(n-2)+1-\dim \mathcal{M}(L)$ and $\dim \mathcal{M}(L)=8+\frac{1}{2}(n-6)(n+1)$ by using \cite[Theorem 1]{2} and \cite[Lemma 23]{16},  we have $n=8$. Hence $ L\cong L_{6,22}(\varepsilon)\oplus A(2) $.\\
 Case (ii). Let now $L\cong L_{5,8}\oplus A$. We know from Proposition \ref{15}, $\dim\mathcal{M}(L_{5,8})=6$. Since
 $5=s(L)=\frac{1}{2}(n-1)(n-2)+1-\dim \mathcal{M}(L)$ and $\dim \mathcal{M}(L)=6+\frac{1}{2}(n-5)n $ by using \cite[Theorem 1]{2} and \cite[Lemma 23]{16},  we have  $n=9 $. Therefore  $L\cong L_{5,8}\oplus A(4)\cong L(4,5,2,4)\oplus A(4)$.\\
 Case (iii). Let $L\cong L_{1}\oplus A$. We know $\dim \mathcal{M}(L_{1})=9$ by using Proposition \ref{15}. Since
  $5=s(L)=\frac{1}{2}(n-1)(n-2)+1-\dim \mathcal{M}(L)$ and $\dim \mathcal{M}(L)=9+\frac{1}{2}(n-7)(n+2) $ by using \cite[Theorem 1]{2} and \cite[Lemma 23]{16}, we have  $n=5 $,  which is contradiction. Thus $ L $ cannot be isomorphic to $ L_{1}\oplus A$. \\
Now let $L$ be  a Lie algebra of nilpotency class $ 3 $. If $ L $ is a capable Lie algebra, then it should be isomorphic to one of the Lie algebras $ L_{4,3}\oplus A(n-4) $  or $  L_{5,5}\oplus A(n-5)  $ by using  \cite[Theorem 5.5]{27}. 

Case (i). Let   $ L \cong L_{4,3}\oplus A(n-4) $. Since $ \dim \mathcal{M}(L_{4,3})=2 $ by using  \cite[Section 2]{13}, we have  $\dim \mathcal{M}(L)=2+ \frac{1}{2}(n-4)(n-1)$ by using \cite[Theorem 1]{2} and \cite[Lemma 23]{16}. Since $5=s(L)=\frac{1}{2}(n-1)(n-2)+1-\dim \mathcal{M}(L)$ and  $\dim \mathcal{M}(L)=2+ \frac{1}{2}(n-4)(n-1)$, we have $ n=7 $. Hence $ L\cong L_{4,3}\oplus A(3)\cong L(3,4,1,4)\oplus A(3)$.

Case (ii). Suppose $ L\cong  L_{5,5}\oplus A(n-5)$. \cite[Section 3]{13} shows that $ \dim \mathcal{M}(L_{5,5})=4 $. Now \cite[Theorem 1]{2} and \cite[Lemma 23]{16} imply that $\dim \mathcal{M}(L)=4+ \frac{1}{2}n(n-5)$. Since $5=s(L)=\frac{1}{2}(n-1)(n-2)+1-\dim \mathcal{M}(L)$ and  $\dim \mathcal{M}(L)=4+ \frac{1}{2}n(n-5)$,   we have $ n=7 $. Therefore $ L\cong  L_{5,5}\oplus A(2) \cong L(4,5,1,6)\oplus A(2)$.\\
If  $L$ is a non-capable Lie algebra of nilpotency class $ 2 $ or $ 3 $, then by using Lemma \ref{23}, we have $n\leq 7$. Therefore $ L\cong L_{6,10} $ by  using Lemma \ref{1*}. This completes the  proof.\\
 \end{proof}
 \end{thm}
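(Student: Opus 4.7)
The plan is to split according to nilpotency class and capability, since $\dim L^2=2$ forces $L$ to be nilpotent of class $2$ or $3$.

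First I would treat the capable case by class. For class two and capability, I would invoke the known classification (\cite[Corollary 2.13]{26}) to reduce to $L\cong L_{6,22}(\varepsilon)\oplus A$, $L_{5,8}\oplus A$, or $L_{1}\oplus A$ for some abelian $A$. Using Proposition~\ref{15} for the multiplier of the indecomposable factor and the standard formulas (\cite[Theorem 1]{2} and \cite[Lemma 23]{16}) for the multiplier of a direct sum with an abelian Lie algebra, I can write $\dim\mathcal{M}(L)$ as an explicit quadratic in $n$. Equating with $\tfrac12(n-1)(n-2)+1-5$ yields a linear equation that pins down $n$ exactly: the three subcases give $n=8$, $n=9$, and an impossible $n=5$ respectively, leading to $L_{6,22}(\varepsilon)\oplus A(2)$ and $L_{5,8}\oplus A(4)\cong L(4,5,2,4)\oplus A(4)$ as the only survivors.

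For class three and capability, I would use \cite[Theorem 5.5]{27} to restrict to $L_{4,3}\oplus A(n-4)$ or $L_{5,5}\oplus A(n-5)$. The multipliers of $L_{4,3}$ and $L_{5,5}$ are recorded in \cite[Sections 2,3]{13} (dimensions $2$ and $4$); extending by the abelian summand formula and solving for $n$ in the equation $s(L)=5$, each case gives $n=7$, hence $L(3,4,1,4)\oplus A(3)$ and $L(4,5,1,6)\oplus A(2)$.

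Finally, for the non-capable case (class $2$ or $3$), Lemma~\ref{23} combined with $\dim L^2=2\ge 2$ gives $n-3<5$, so $n\le 7$. Lemma~\ref{1*} then classifies all such Lie algebras with $\dim L^2=2$ and $n\le 7$ up to capability, and among them only $L_{6,10}$ has $s(L)=5$. Collecting the five outcomes completes the list. The only non-routine step in this plan is organizing the capable classifications and confirming in each subcase that the quadratic in $n$ coming from $s(L)=5$ has a unique admissible integer solution; the arithmetic is straightforward once the multiplier dimensions have been substituted.
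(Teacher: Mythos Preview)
Your proposal is correct and follows essentially the same approach as the paper's own proof: split by nilpotency class and capability, apply \cite[Corollary 2.13]{26} and \cite[Theorem 5.5]{27} in the capable cases, solve for $n$ via the direct-sum multiplier formulas, and handle the non-capable case with Lemma~\ref{23} and Lemma~\ref{1*}. The case analysis, cited inputs, and arithmetic all match.
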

 We now consider the case that $\dim L^2=3$. By looking all nilpotent Lie algberas listed in \cite{7}, we may  choose all $ n $-dimensional nilpotent Lie algebras $ L $ such that $ \dim L^{2}=3 $  for $ n=5 $ or $ 6 $ in the Table 1.
  \begin{longtable}{cccc}
\multicolumn{2}{c}{\textbf{Table 1. } }\\
\hline \multicolumn{1}{c}{\textbf{Name}} & \multicolumn{1}{c}{\textsf{Nonzero multiplication}}    \\
\hline
\endhead
\hline \multicolumn{2}{r}{\small \itshape continued on the next page}
\endfoot
\endlastfoot
$ L_{5,6} $ &$[x_{1},x_{2}] =x_{3}, [x_{1},x_{3}] =x_{4}, [x_{1},x_{4}] =[x_{2},x_{3}] =x_{5}$\\  \\
 $ L_{5,7} $& $[x_{1},x_{2}] =x_{3}, [x_{1},x_{3}] =x_{4}, [x_{1},x_{4}]  =x_{5}$ \\ \\
$ L_{5,9} $& $[x_{1},x_{2}] =x_{3}, [x_{1},x_{3}] =x_{4}, [x_{2},x_{3}] =x_{5}$ \\ \\
$ L_{6,6}$&$[x_{1},x_{2}] =x_{3}, [x_{1},x_{3}] =x_{4}, [x_{1},x_{4}] =[x_{2},x_{3}] =x_{5}$\\ \\
$ L_{6,7} $& $[x_{1},x_{2}] =x_{3}, [x_{1},x_{3}] =x_{4}, [x_{1},x_{4}]  =x_{5}$\\ \\
$ L_{6,9} $& $[x_{1},x_{2}] =x_{3}, [x_{1},x_{3}] =x_{4}, [x_{2},x_{3}] =x_{5}$\\ \\
$ L_{6,11}$ &$[x_{1},x_{2}] =x_{3}, [x_{1},x_{3}] =x_{4},$ \\ 
&$[x_{1},x_{4}] =[x_{2},x_{3}] =[x_{2},x_{5}]=x_{6}$ \\ 
\\
 $ L_{6,12} $& $[x_{1},x_{2}] =x_{3}, [x_{1},x_{3}] =x_{4}, [x_{1},x_{4}] =[x_{2},x_{5}] =x_{6}$  \\
  \\
   $ L_{6,13} $& $[x_{1},x_{2}] =x_{3}, [x_{1},x_{3}] = [x_{2},x_{4}] =x_{5},$ \\&$[x_{1},x_{5}] = [x_{3},x_{4}] =x_{6}$ \\
 \\
 $ L_{6,19}(\epsilon) $ & $[x_{1},x_{2}] =x_{4}, [x_{1},x_{3}] =  x_{5}, [x_{1},x_{5}] =[x_{2},x_{4}] =x_{6}, $\\ 
 &$[x_{3},x_{5}] =\epsilon x_{6}$   \\
 \\
$ L_{6,20} $ & $[x_{1},x_{2}] =x_{4}, [x_{1},x_{3}]  =x_{5},[x_{1},x_{5}] = [x_{2},x_{4}] =x_{6}$ \\ \\
 $ L_{6,23} $&$[x_{1},x_{2}] =x_{3}, [x_{1},x_{3}] =[x_{2},x_{4}] =x_{5}, [x_{1},x_{4}] =x_{6}$\\
 \\
 $ L_{6,24}(\epsilon) $ &$[x_{1},x_{2}] =x_{3}, [x_{1},x_{3}] =[x_{2},x_{4}] =x_{5},$  \\
 & $[x_{1},x_{4}] =\varepsilon x_{6}, [x_{2},x_{3}] =x_{6}$\\
 \\
 $ L_{6,25} $& $[x_{1},x_{2}] =x_{3}, [x_{1},x_{3}] =x_{5}, [x_{1},x_{4}] =x_{6}$   \\
 \\
$ L_{6,26} $& $[x_{1},x_{2}] =x_{4}, [x_{1},x_{3}] =x_{5}, [x_{2},x_{3}] =x_{6}$   \\

 \hline
\end{longtable}
 Assume $ L $ is nilpotent Lie algebra  of dimension $7$ such that $ \dim L^{2}=3 $. By looking the classification of all nilpotent Lie algebras in \cite{15}, $L$ must be isomorphic to one of the Lie algebras listed at the Table  $2  $ and $ 3 $.
 \begin{longtable}{cccc}
\multicolumn{2}{c}{\textbf{Table 2.  $ 7 $-dimensional indecomposable nilpotent Lie algebras} }\\
\hline \multicolumn{1}{c}{\textbf{Name}} & \multicolumn{1}{c}{\textsf{Nonzero multiplication}}    \\
\hline
\endhead
\hline \multicolumn{2}{r}{\small \itshape continued on the next page}
\endfoot
\endlastfoot
$ 37A $& $[x_{1},x_{2}] =x_{5}, [x_{2},x_{3}] =x_{6}, [x_{2},x_{4}] =x_{7}$  \\
\\
 $ 37B $  & $[x_{1},x_{2}] =x_{5}, [x_{2},x_{3}] =x_{6}, [x_{3},x_{4}] =x_{7}$  \\
\\
$ 37C $  & $[x_{1},x_{2}] =[x_{3},x_{4}]=x_{5}, [x_{2},x_{3}] =x_{6}, [x_{2},x_{4}] =x_{7}$   \\
\\
$ 37D $  & $[x_{1},x_{2}] =[x_{3},x_{4}]=x_{5}, [x_{1},x_{3}] =x_{6}, [x_{2},x_{4}] =x_{7}$   \\
\\
$ 257A $&$[x_{1},x_{2}] =x_{3}, [x_{1},x_{3}] =[x_{2},x_{4}]=x_{6}, [x_{1}, x_{5}]=x_{7}$\\
\\
$ 257B $&$[x_{1},x_{2}] =x_{3}, [x_{1},x_{3}] =x_{6}, [x_{1}, x_{4}]=[x_{2},x_{5}]=x_{7}$  \\
\\
$ 257C $ & $[x_{1},x_{2}] =x_{3}, [x_{1},x_{3}] =[x_{2}, x_{4}]=x_{6}, [x_{2},x_{5}]=x_{7}$  \\
\\
$ 257D $&$[x_{1},x_{2}] =x_{3}, [x_{1},x_{3}] =[x_{2}, x_{4}]=x_{6},$ 
\\
&$[x_{1},x_{4}]=[x_{2},x_{5}]=x_{7}$  \\
\\
$ 257E $& $[x_{1},x_{2}] =x_{3}, [x_{1},x_{3}] =[x_{4}, x_{5}]=x_{6}, [x_{2},x_{4}]=x_{7}$ \\
\\
$ 257F $& $[x_{1},x_{2}] =x_{3}, [x_{2},x_{3}] =[x_{4}, x_{5}]=x_{6}, [x_{2},x_{4}]=x_{7}$ \\
\\
$ 257G $& $[x_{1},x_{2}] =x_{3}, [x_{1},x_{3}] =[x_{4}, x_{5}]=x_{6},$\\ 
&$[x_{1},x_{5}]=[x_{2},x_{4}]=x_{7}$ \\
\\
$ 257H $& $[x_{1},x_{2}] =x_{3}, [x_{1},x_{3}] =[x_{2}, x_{4}]=x_{6}, [x_{4},x_{5}]=x_{7}$\\
\\
$ 257I $&$[x_{1},x_{2}] =x_{3}, [x_{1},x_{3}] =[x_{1}, x_{4}]=x_{6},$\\ 
&$[x_{1},x_{5}]=[x_{2},x_{3}]=x_{7}$ \\
\\
$ 257J $&$[x_{1},x_{2}] =x_{3}, [x_{1},x_{3}] =[x_{2}, x_{4}]=x_{6},$
$[x_{1},x_{5}]=[x_{2},x_{3}]=x_{7}$\\
\\
$ 257K $&$[x_{1},x_{2}] =x_{3}, [x_{1},x_{3}] =x_{6}, [x_{2},x_{3}] =[x_{4},x_{5}]=x_{7}$\\
\\
$ 257L $&$[x_{1},x_{2}] =x_{3}, [x_{1},x_{3}] =[x_{2}, x_{4}]=x_{6},$\\ 
&$[x_{2},x_{3}]=[x_{4},x_{5}]=x_{7}$\\
\\
 $ 147A $&  $[x_{1},x_{2}] =x_{4},[x_{1},x_{3}]  =x_{5}, $ \\&$[x_{1},x_{6}] =[x_{2},x_{5}]=  [x_{3},x_{4}] =x_{7}$\\
 \\
  $ 147B $&  $[x_{1},x_{2}] =x_{4},[x_{1},x_{3}]  =x_{5}, $\\&$[x_{1},x_{4}] =[x_{2},x_{6}]=  [x_{3},x_{5}] =x_{7}$\\
 \\
  $ 1457A $&  $[x_{1},x_{i}] =x_{i+1}~~ i=2,3 , ~~[x_{1},x_{4}] =[x_{5},x_{6}]=x_{7}$ \\
 \\
 $ 1457B $ &  $[x_{1},x_{i}] =x_{i+1}~~ i=2,3 , $ \\&$[x_{1},x_{4}] =[x_{2},x_{3}]=[x_{5},x_{6}]=x_{7}$\\
 \\
 \\
$ 137A $ & $[x_{1},x_{2}] =x_{5}, [x_{1},x_{5}] =[x_{3},x_{6}] = x_{7}, [x_{3},x_{4}] =x_{6}$   \\
 \\
$ 137B $ & $[x_{1},x_{2}] =x_{5}, [x_{3},x_{4}] =x_{6},$  
  \\& $[x_{1},x_{5}] =[x_{2},x_{4}] = [x_{3},x_{6}] =x_{7}$ \\
 \\
$ 137C $  & $[x_{1},x_{2}] =x_{5}, [x_{1},x_{4}] =[x_{2},x_{3}] =x_{6},$  
  \\& $[x_{1},x_{6}] =x_{7},  [x_{3},x_{5}] =-x_{7}$ \\
 \\
 $ 137D $ & $[x_{1},x_{2}] =x_{5}, [x_{1},x_{4}] =[x_{2},x_{3}] =x_{6},$
  \\& $[x_{1},x_{6}] =[x_{2},x_{4}] =x_{7},  [x_{3},x_{5}] =-x_{7}$ \\
 \\
$ 1357A $& $[x_{1},x_{2}] =x_{4}, [x_{1},x_{4}] =[x_{2},x_{3}] =x_{5},$  
  \\& $[x_{1},x_{5}] =[x_{2},x_{6}] =x_{7},  [x_{3},x_{4}] =-x_{7}$ \\
 \\
 $ 1357B $ & $[x_{1},x_{2}] =x_{4}, [x_{1},x_{4}] =[x_{2},x_{3}] =x_{5},$  
  \\&$[x_{1},x_{5}] =[x_{3},x_{6}] =x_{7},  [x_{3},x_{4}] =-x_{7}$ \\
 \\
 $ 1357C $ & $[x_{1},x_{2}] =x_{4}, [x_{1},x_{4}] =[x_{2},x_{3}] =x_{5},$ 
  \\& $[x_{1},x_{5}] =[x_{2},x_{4}] =x_{7},  [x_{3},x_{4}] =-x_{7}$ \\
 \hline
\end{longtable}
 \begin{longtable}{ccccc| |cccccc}
\multicolumn{3}{c}{\textbf{Table 3.  $ 7 $-dimensional decomposable nilpotent Lie algebras} }\\
\hline \multicolumn{2}{c}{\textbf{Name}} & \multicolumn{2}{c}{\textbf{Name}}   \\
\hline
\endhead
\hline \multicolumn{3}{r}{\small \itshape continued on the next page}
\endfoot
\endlastfoot
&$L_{4,3} \oplus H(1)$ & $ L_{6,19}(\epsilon) \oplus A(1)$ \\
&$ L_{5,6} \oplus A(2)$ &$ L_{6,20} \oplus A(1)$\\
&$ L_{5,7} \oplus A(2)$&$ L_{6,23} \oplus A(1)$\\
&$ L_{5,9} \oplus A(2)$&$ L_{6,24}(\epsilon)\oplus A(1)$\\
&$ L_{6,11} \oplus A(1)$& $ L_{6,25} \oplus A(1)$\\
&$ L_{6,12} \oplus A(1)$&$ L_{6,26} \oplus A(1)$\\
&$ L_{6,13} \oplus A(1)$ \\
 \hline
\end{longtable}
We need the following lemma from \cite[Lemma 2.7]{29-} for the proof of the Main Theorem.
 \begin{lem}\label{1**}
 Let $ L $ be an $ n $-dimensional nilpotent Lie algebra such that $ n=5, 6 $ or $ 7 $, $ \dim L^{2}=\dim Z(L)=3 $ and $ Z(L)=L^{2} $. Then the structure and the Schur multiplier of $L$  are given in the following table.
\begin{longtable}{cccccc}
\multicolumn{3}{c}{\textbf{Table 4. } }\\
\hline \multicolumn{1}{c}{\textsf{Name}}& \multicolumn{1}{c}{\textbf{$\dim\mathcal{M}(L)$}} & \multicolumn{1}{c}{\textsf{$s(L)$}} &    \multicolumn{1}{c}{\textsf{Name}}& \multicolumn{1}{c}{\textbf{$\dim\mathcal{M}(L)$}} & \multicolumn{1}{c}{\textsf{$s(L)$}}    \\
\hline
\endhead
\hline \multicolumn{4}{r}{\small \itshape continued on the next page}
\endfoot
\endlastfoot
 $ L_{6,26} $& $ 8 $  & $ 3 $ &$ 37C $&$ 11 $ & $ 5 $  \\\\
$ 37A $&$ 12 $  & $ 4 $ & $ 37D $&$ 11 $ & $ 5 $   \\\\
$ 37B $ &$11 $ & $ 5 $    \\
 \hline
\end{longtable}
\end{lem}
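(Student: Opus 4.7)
The plan is to first pin down the candidate Lie algebras using the structural hypothesis, then apply the five-term sequence of Theorem~\ref{16} to each.

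Since $Z(L) = L^{2}$, we have $[L, L^{2}] = [L, Z(L)] = 0$, so $L$ is nilpotent of class at most $2$ and $L^{ab} := L/L^{2}$ has dimension $n-3$. The bracket factors as a surjection $\Lambda^{2} L^{ab} \twoheadrightarrow L^{2}$. For $n=5$, $\dim \Lambda^{2} L^{ab} = 1 < 3$, so no such algebra exists. For $n=6$, $\dim \Lambda^{2} L^{ab} = 3 = \dim L^{2}$ forces this map to be an isomorphism, so $L$ is the free $2$-step nilpotent Lie algebra on three generators, namely $L_{6,26}$. For $n=7$, scanning Tables~2 and 3, only $37A, 37B, 37C, 37D$ survive: the $257$-, $147$-, $137$- and $1357$-families in Table~2 all contain an element of $L^{2}$ that appears as an argument of a further nonzero bracket, hence have nilpotency class at least $3$; and every algebra listed in Table~3 either has class $\geq 3$ or, in the case of $L_{6,26}\oplus A(1)$, carries an abelian direct summand lying in $Z(L)\setminus L^{2}$, violating $L^{2}=Z(L)$.

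To compute the Schur multipliers, apply Theorem~\ref{16}. Since $L^{ab}$ is abelian of dimension $n-3$, $\mathcal{M}(L^{ab})\cong \Lambda^{2}L^{ab}$ has dimension $\binom{n-3}{2}$, and the exact sequence collapses to
$$\dim \mathcal{M}(L) = 3(n-3) + \binom{n-3}{2} - 3 - \dim \ker g.$$
For each of the five candidates, $\ker g$ coincides with the subspace $K\subseteq L^{2}\otimes_{mod} L^{ab}$ spanned by the Jacobi triples $[x_{i},x_{j}]\otimes \overline{x_{k}}+[x_{k},x_{i}]\otimes \overline{x_{j}}+[x_{j},x_{k}]\otimes \overline{x_{i}}$ with $i<j<k$ ranging over the $n-3$ generators of $L^{ab}$. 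Listing the $\binom{n-3}{3}$ such triples, reducing by the multiplication table and extracting the rank yields $\dim K = 1$ for $L_{6,26}$ (hence $\dim \mathcal{M} = 8$), $\dim K = 3$ for $37A$ (hence $\dim \mathcal{M} = 12$), and $\dim K = 4$ for each of $37B, 37C, 37D$ (hence $\dim \mathcal{M} = 11$). The tabulated values of $s(L)$ follow from $s(L) = \binom{n-1}{2} + 1 - \dim \mathcal{M}(L)$.

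The main obstacle is the equality $\ker g = K$; Theorem~\ref{16} supplies only the inclusion $K\subseteq \ker g$. For class-$2$ algebras this should follow from a Hopf-formula style argument, identifying $\ker g$ with the image of the natural map $\Lambda^{3}L^{ab}\to L^{2}\otimes_{mod} L^{ab}$. Once this is granted, the remaining work is the bookkeeping-heavy but elementary verification that the listed Jacobi vectors are linearly independent inside the $3(n-3)$-dimensional tensor space for each of the four $7$-dimensional cases, and the single-triple computation for $L_{6,26}$.
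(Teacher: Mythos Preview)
The paper itself supplies no proof here: the lemma is quoted verbatim from \cite[Lemma 2.7]{29-}. So there is no argument in the present paper to compare against, and your direct computation is a genuine alternative to the bare citation.

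Your structural step is clean and correct: class two forces the bracket to factor through $\Lambda^{2}L^{ab}$, the dimension count kills $n=5$, pins $n=6$ to the free $2$-step algebra $L_{6,26}$, and your scan of Tables~2 and~3 for $n=7$ correctly isolates $37A$--$37D$. The arithmetic built on Theorem~\ref{16} is also right, and your tabulated values of $\dim K$ check out triple by triple.

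The gap you flag is real and is the only genuine obstacle. Theorem~\ref{16} gives only $K\subseteq\ker g$, so your formula yields a priori only the upper bound
\[
\dim\mathcal{M}(L)\;\le\;3(n-3)+\binom{n-3}{2}-3-\dim K,
\]
and the tabulated values would follow only once equality $\ker g=K$ is established. For class-two algebras this equality does hold --- it is precisely the statement that the five-term sequence of Theorem~\ref{16} identifies $\ker g$ with the image of the Koszul map $\Lambda^{3}L^{ab}\to L^{2}\otimes_{mod}L^{ab}$, which one extracts from the Hopf formula or the Chevalley--Eilenberg description of $H_{2}(L)$ --- but it is not a consequence of Theorem~\ref{16} as stated and needs either a reference (e.g.\ the finer results in \cite{N} or \cite{26}) or a direct argument. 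A pragmatic way to close the gap without invoking the general fact is to pair your upper bound with a matching lower bound obtained by exhibiting the required number of independent $2$-cycles in the Chevalley--Eilenberg complex of each of the five algebras; this is routine for algebras of this size and would make the argument self-contained.
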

 \begin{lem}\label{11**}
 Let $ L $ be a nilpotent Lie algebra  of dimension at most $7$ such that $ \dim L^{2}=3 $, $ \dim Z(L)=2 $ and $ Z(L)\subset L^{2} $. Then the structure and the Schur multiplier of $L$ are given in the following table.
 \begin{longtable}{cccccc}
\multicolumn{3}{c}{\textbf{Table 5.}}\\
\hline \multicolumn{1}{c}{\textsf{Name}} &\multicolumn{1}{c}{\textbf{$\dim \mathcal{M}(L)$}} & \multicolumn{1}{c}{\textsf{$s(L)$}} &\multicolumn{1}{c}{\textsf{Name}} &\multicolumn{1}{c}{\textbf{$\dim \mathcal{M}(L)$}} & \multicolumn{1}{c}{\textsf{$s(L)$}}     \\
\hline
\endhead
\hline \multicolumn{3}{r}{\small \itshape continued on the next page}
\endfoot
\endlastfoot
$ L_{5,9} $ & $ 3 $  & $ 4 $  & $ 257E $ & $ 8 $ & $ 8 $ \\ \\\\
$ L_{6,23} $&$ 6$ & $ 5 $ & $ 257 F$ & $ 9 $ & $ 7 $ \\\\\\
$ L_{6,24}(\epsilon) $&$ 5$  & $ 6 $ &$ 257G $ & $ 8 $ & $ 8 $ \\\\\
$ L_{6,25} $&$ 6 $ & $ 5 $ &$ 257 H$ & $ 8 $ & $ 8 $ \\ \\\\
$ 257A $ & $ 9 $ & $ 7 $ &$ 257I $ & $ 8 $ & $ 8 $ \\ \\\\
$ 257B $ & $ 8 $ & $ 8 $ &$ 257J $ & $ 8 $ & $ 8 $ \\\\\\
$ 257C $ & $ 9 $ & $ 7 $&$ 257K $ & $ 6 $ & $ 10 $ \\ \\\\
$ 257D $ & $ 8 $ & $ 8 $ & $ 257L $ & $ 6 $ & $ 10 $ \\\\\\
  \hline
\end{longtable}
\begin{proof}
The proof is similar to \cite[Lemma 2.5]{29-}. 
\end{proof}
 \end{lem}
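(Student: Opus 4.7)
The plan is to parallel the argument of \cite[Lemma 2.5]{29-}. The first step is to enumerate the Lie algebras that satisfy the hypotheses. Since $Z(L)\subset L^{2}$, the algebra $L$ admits no nontrivial abelian direct summand, because any such summand would contribute to $Z(L)\setminus L^{2}$. Consequently I restrict to the indecomposable Lie algebras of dimension $5, 6, 7$ with $\dim L^{2}=3$ appearing in Tables 1 and 2, and impose $\dim Z(L)=2$. Reading the center off each multiplication table, the survivors are exactly $L_{5,9}$ in dimension five; $L_{6,23}$, $L_{6,24}(\epsilon)$, $L_{6,25}$ in dimension six; and the twelve seven-dimensional algebras $257A$ through $257L$. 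These are precisely the rows of Table 5.

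For each survivor the goal is to determine $\dim\mathcal{M}(L)$, after which $s(L)$ follows from the identity $s(L)=\tfrac{1}{2}(n-1)(n-2)+1-\dim\mathcal{M}(L)$. The multipliers of $L_{5,9}$ and of the three six-dimensional algebras are already recorded in \cite{13,2}, yielding the values $3,6,5,6$ and hence the corresponding entries $s(L)=4,5,6,5$. For the twelve algebras of the $257$ family, which are of nilpotent class three with $\dim L^{2}=3$, Theorem \ref{16} does not apply directly; instead I would compute $\mathcal{M}(L)$ from a free presentation $L\cong F/R$ on generators lifting a basis of $L/L^{2}$, exhibiting a spanning set of $R\cap F^{2}/[R,F]$ from the defining relations together with the consequences of the Jacobi identity, and then counting a basis.

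A useful shortcut available in every case is that $L/Z(L)$ has dimension five with one-dimensional derived subalgebra contained in its center, so $L/Z(L)\cong H(m)\oplus A(k)$ for some $m,k$, whose multiplier is known by \cite{29}; the five-term and Ganea-type exact sequences then reduce the computation of $\mathcal{M}(L)$ to a residue coming from $Z(L)\otimes L^{\mathrm{ab}}$ together with a correction that can be read off Theorem \ref{16} applied to the central extension $0\to Z(L)\to L\to L/Z(L)\to 0$. The main obstacle is the bookkeeping across the twelve $257$-cases: these algebras differ only in a few defining brackets, and the alternating contributions produced by the Jacobi identity in $\ker g$ (equivalently, the relations determining $[R,F]$) vary subtly from one case to the next, so errors of one in the dimension count are easy to commit. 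Once those counts are verified the dimensions of $\mathcal{M}(L)$, and then the values of $s(L)$ recorded in Table 5, follow at once.
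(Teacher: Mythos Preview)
Your proposal is correct and follows exactly the approach the paper defers to, namely that of \cite[Lemma 2.5]{29-}: list the admissible algebras by inspecting the classification in Tables~1 and~2 under the constraints $\dim L^{2}=3$, $\dim Z(L)=2$, $Z(L)\subset L^{2}$ (which, as you note, forces indecomposability), and then compute each Schur multiplier either from the literature or from a free presentation. One small caveat: the shortcut you sketch for the $257$ family should invoke the general five-term (Ganea) exact sequence for the central ideal $Z(L)$ rather than Theorem~\ref{16}, which is stated only for class-two algebras and the extension by $L^{2}$.
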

\begin{lem}\label{L1}
 Let $ L $ be an $n$-dimensional nilpotent Lie algebra such that $ n=7, $  $ \dim L^{2}=3 $ and $ \dim Z(L)=4 $. Then the structure and the Schur multiplier of $L$ are given in the following table.
 \begin{longtable}{ccc }
\multicolumn{3}{c}{\textbf{Table 6.}}\\
\hline \multicolumn{1}{c}{\textsf{Name}} &\multicolumn{1}{c}{\textbf{$\dim \mathcal{M}(L)$}} & \multicolumn{1}{c}{\textsf{$s(L)$}}  \\
\hline
\endhead
\hline \multicolumn{3}{r}{\small \itshape continued on the next page}
\endfoot
\endlastfoot
$ L_{5,9}\oplus A(2) $& $ 8 $ & $ 8 $  \\
$ L_{6,26}\oplus A(1) $ & $ 11$  & $5 $     
\\
  \hline
\end{longtable}
\begin{proof}
Since $ \dim Z(L)=4, $ $ L $ is isomorphic to $ L_{5,9}\oplus A(2) $ or $ L_{6,26}\oplus A(1) $ by searching in Tables $ 2 $ and $ 3. $ Let $ L\cong  L_{6,26}\oplus A(1). $ Since $ \dim \mathcal{M}(L_{6,26})=8 $ by using Table $ 4, $ we have  $\dim \mathcal{M}(L)=11$ by using \cite[Theorem 1]{2} and \cite[Lemma 23]{16}. Hence $s(L)=5.$ Also by using similar method , we can see $\dim \mathcal{M}(L_{5,9}\oplus A(2))=8$ and $ s(L)=8. $
\end{proof}
\end{lem}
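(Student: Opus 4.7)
The plan splits into two stages: first reduce to a finite list of candidates by searching the classifications assembled in Tables 2 and 3, then compute $\dim \mathcal{M}(L)$ and $s(L)$ for each surviving candidate directly.

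First I would scan Tables 2 and 3, which together list all $7$-dimensional nilpotent Lie algebras with $\dim L^{2}=3$, and read off $\dim Z(L)$ from the multiplication data in each row. For a decomposable algebra $K\oplus A(r)$ one has $Z(K\oplus A(r))=Z(K)\oplus A(r)$, so $\dim Z(L)=\dim Z(K)+r$; imposing $\dim Z(L)=4$ then selects exactly the two pairs $(K,r)=(L_{5,9},2)$ (since $Z(L_{5,9})=\langle x_{4},x_{5}\rangle$ has dimension $2$) and $(K,r)=(L_{6,26},1)$ (since $Z(L_{6,26})=L_{6,26}^{2}=\langle x_{4},x_{5},x_{6}\rangle$ has dimension $3$). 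For the indecomposable entries in Table 2, a direct inspection of the brackets shows $\dim Z(L)\le 3$ in every case, so they are all excluded.

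Second, for each of the two surviving candidates I would apply the classical Künneth-type identity obtained by combining \cite[Theorem 1]{2} with \cite[Lemma 23]{16},
\[
\mathcal{M}(K\oplus A(r))\cong \mathcal{M}(K)\oplus \mathcal{M}(A(r))\oplus \bigl(K^{ab}\otimes A(r)\bigr),
\]
using the values of $\dim \mathcal{M}(K)$ already recorded in the earlier tables. For $L_{6,26}\oplus A(1)$, Table 4 gives $\dim \mathcal{M}(L_{6,26})=8$, while $\mathcal{M}(A(1))=0$ and $\dim L_{6,26}^{ab}=6-3=3$, yielding $\dim \mathcal{M}(L)=8+0+3=11$ and hence $s(L)=\tfrac{1}{2}\cdot 6\cdot 5+1-11=5$. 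For $L_{5,9}\oplus A(2)$, Table 5 gives $\dim \mathcal{M}(L_{5,9})=3$, while $\dim \mathcal{M}(A(2))=1$ and $\dim L_{5,9}^{ab}=2$, yielding $\dim \mathcal{M}(L)=3+1+2\cdot 2=8$ and $s(L)=8$.

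The only mildly delicate step is confirming that no indecomposable $7$-dimensional Lie algebra from Table 2 can have $\dim Z(L)=4$: for each row one must check that at most three of the basis vectors $x_{1},\ldots,x_{7}$ annihilate every listed bracket, so $Z(L)$ is forced to have dimension at most $3$. This is a routine case check rather than a conceptual obstacle, and everything else reduces to an arithmetic verification against the previously computed Schur multipliers.
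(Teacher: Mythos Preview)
Your proposal is correct and follows essentially the same approach as the paper: search Tables~2 and~3 for the candidates with $\dim Z(L)=4$, then compute the Schur multipliers via the K\"unneth-type formula coming from \cite[Theorem~1]{2} and \cite[Lemma~23]{16}, using the values of $\dim\mathcal{M}(L_{6,26})$ and $\dim\mathcal{M}(L_{5,9})$ already recorded in Tables~4 and~5. Your version is in fact slightly more explicit than the paper's (you spell out the K\"unneth arithmetic and the reason each indecomposable entry is excluded), but the strategy is identical.
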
 
 \begin{lem}\cite[Lemma 2.9]{29-}\label{111**}
 Let $ L $ be an $n$-dimensional nilpotent Lie algebra such that $ n=5, 6 $ or $ 7 $, $ \dim L^{2}=3 $ and $ \dim Z(L)=1 $. Then the structure and the Schur multiplier of $L$ are given in the following table.
 \begin{longtable}{ccc ccc}
\multicolumn{3}{c}{\textbf{Table 7.}}\\
\hline \multicolumn{1}{c}{\textsf{Name}} &\multicolumn{1}{c}{\textbf{$\dim \mathcal{M}(L)$}} & \multicolumn{1}{c}{\textsf{$s(L)$}} &\multicolumn{1}{c}{\textsf{Name}} &\multicolumn{1}{c}{\textbf{$\dim \mathcal{M}(L)$}} & \multicolumn{1}{c}{\textsf{$s(L)$}} \\
\hline
\endhead
\hline \multicolumn{3}{r}{\small \itshape continued on the next page}
\endfoot
\endlastfoot
$ L_{5,6} $& $ 3 $ & $ 4 $  &$ 1457A $& $ 6$  & $10$  \\\\
$ L_{5,7} $&$ 3$  & $ 4 $ &  $ 1457B $&$ 6 $  & $10$   \\ \\
$ L_{6,11}$&$ 5$ & $ 6 $ &$ 137A $& $ 7 $  & $ 9 $   \\\\
$ L_{6,12} $&$ 5 $ & $6 $ &$ 137B $&  $ 7 $  & $ 9 $       \\\\
 $ L_{6,13} $&$ 4 $  & $ 7$ & $ 137C $& $ 7 $  & $ 9 $   \\\\
$ L_{6,19}(\epsilon) $& $ 5 $ & $ 6 $ &   $ 137D $&$ 7 $ & $ 9$   \\\\
 $ L_{6,20} $&$ 5 $  & $ 6 $ &$ 1357A $& $ 7 $ & $ 9 $     \\\\
 $ 147A $&$ 8 $  & $8$ &  $ 1357B $&$ 6 $ & $ 10 $ \\\\
 $ 147B $&$ 8 $  & $8$ &  $ 1357C $&$6$  & $ 10 $  \\
  \hline
\end{longtable}
 \end{lem}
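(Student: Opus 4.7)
The plan is to first identify all nilpotent Lie algebras of dimension $n \in \{5,6,7\}$ satisfying $\dim L^{2} = 3$ and $\dim Z(L) = 1$, and then compute the Schur multiplier of each. For the enumeration I would run through Tables 1 and 2 (and note that no decomposable algebra from Table 3 qualifies, since an abelian summand $A(k)$ with $k\geq 1$ forces $\dim Z(L) \geq 2$). From Table 1, the $5$- and $6$-dimensional candidates whose center has dimension $1$ are exactly $L_{5,6}$, $L_{5,7}$, $L_{6,11}$, $L_{6,12}$, $L_{6,13}$, $L_{6,19}(\epsilon)$, and $L_{6,20}$. From Table 2, the $7$-dimensional indecomposable candidates are the two $147$ algebras, the two $1457$ algebras, the four $137$ algebras, and the three $1357$ algebras; in each case the single basis vector $x_{n}$ spans the center and the remaining hypotheses are read off directly from the multiplication table.

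Having fixed the list, for each algebra $L$ I would compute $\dim\mathcal{M}(L)$ and then derive $s(L)$ via the identity $s(L) = \tfrac{1}{2}(n-1)(n-2)+1-\dim\mathcal{M}(L)$. All of the algebras above have nilpotency class at least three, so Theorem \ref{16} does not apply directly; instead I would work from the Hopf formula $\mathcal{M}(L) = (R\cap F^{2})/[R,F]$ applied to a minimal free presentation $L = F/R$. The five-dimensional values $\dim \mathcal{M}(L_{5,6}) = \dim \mathcal{M}(L_{5,7}) = 3$ are already recorded in \cite{13}, and the six- and seven-dimensional values can be cross-checked against the classification of multipliers of low-dimensional nilpotent Lie algebras in \cite{7} and \cite{15}.

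The main obstacle is the careful bookkeeping needed for the higher-class algebras in dimension seven (the $137$, $147$, $1357$, and $1457$ families): one has to lift every defining relator and every Jacobi consequence into the free Lie algebra, quotient by $[R,F]$, and then verify linear independence of the resulting classes at weights up to five. Since each computation runs in exact parallel to the argument of \cite[Lemma 2.9]{29-}, the cleanest route is to invoke that reference directly; once $\dim \mathcal{M}(L)$ is established for each candidate, the corresponding entry in Table 7 reduces to a single arithmetic substitution.
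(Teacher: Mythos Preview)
Your proposal is correct and aligns with the paper, which gives no proof at all for this lemma but simply cites \cite[Lemma 2.9]{29-}; your sketch of the enumeration-then-Hopf-formula computation is exactly what that reference carries out, and you end up invoking it anyway. One small slip: your exclusion of the decomposable algebras in Table~3 appeals only to an abelian summand $A(k)$, but $L_{4,3}\oplus H(1)$ has no such summand---the clean argument is that any nontrivial direct sum of nonzero nilpotent Lie algebras has $Z(L_1\oplus L_2)=Z(L_1)\oplus Z(L_2)$ of dimension at least two.
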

Recall that a Lie algebra $L$ is  called generalized Heisenberg of rank $n$ if $L^2 = Z(L)$ and $\dim L^2 = n$.
 \begin{lem}\label{27}
 Let $ L $ be an $ n $-dimensional generalized Heisenberg of rank $ 3 $ with $ s(L)=5 $, then $ n\leq 7 $.
 \begin{proof}
 By Theorem \ref{16} we have $ \dim \ker g= \dim \mathcal{M}(L^{ab})-\dim L^{2}+\dim L^{ab}\otimes_{mod} L^{2}-\dim \mathcal{M}(L) $. Since $ \dim \mathcal{M}(L)= \frac{1}{2}(n-1)(n-2)-4 $ and $ \dim L^{ab}=n-3 $, we have $ \dim \ker g=n-3 $.\\
By contrary let  $ n\geq 8 $. Then $ d=\dim L^{ab}=n-3\geq 5$. Since $ \dim L^{2}=3 $, we can choose  a basis $ \lbrace x_{1}+L^{2}, ...,  x_{d}+L^{2}\rbrace$ for $ L^{ab} $ such that $ [x_{1}, x_{2}] $, $  [x_{2}, x_{3}] $ and $  [x_{3}, x_{4}] $ are non-trivial in $ L^{2} $. Thus 
 \begin{equation*}
 L^{ab}\otimes_{mod} L^{2}\cong \bigoplus \limits_{i=1}^d (\langle x_{i}+ L^{2}) \rangle\otimes_{mod}L^{2}).
 \end{equation*}
 Hence all elements of 
\begin{equation*}
\lbrace [x_{1}, x_{2}]\otimes x_{i}+L^{2}\oplus [x_{i}, x_{1}]\otimes x_{2}+L^{2}\oplus [x_{2}, x_{i}]\otimes x_{1}+L^{2},\mid 3\leq i \leq d, i\neq 1, 2 \rbrace
\end{equation*}
and
\begin{equation*}
\lbrace [x_{2}, x_{3}]\otimes x_{i}+L^{2}\oplus [x_{i}, x_{3}]\otimes x_{2}+L^{2}\oplus [x_{3}, x_{i}]\otimes x_{2}+L^{2},\mid 3\leq i \leq d, i\neq 1, 2, 3 \rbrace 
\end{equation*}
\begin{equation*}
\lbrace [x_{3}, x_{4}]\otimes x_{i}+L^{2}\oplus [x_{i}, x_{3}]\otimes x_{4}+L^{2}\oplus [x_{4}, x_{i}]\otimes x_{3}+L^{2},\mid 3\leq i \leq d, i\neq  2,3,4 \rbrace
\end{equation*}
are linearly independent and so $ 2(n-6)+n-5 \leq \ker g $. That is a contradiction for $ n\geq  8$. Therefore the assumption is false and the result follows. 
 \end{proof}
 \end{lem}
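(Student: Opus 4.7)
The plan is to squeeze $\dim \ker g$ between an identity that is linear in $n$ and a lower bound that grows like $3n$, which forces $n$ to be small. Since $L$ is generalized Heisenberg of rank $3$, its nilpotency class is two, so Theorem \ref{16} applies. Taking the alternating sum of dimensions in
\begin{equation*}
0 \to \ker g \to L^2 \otimes_{mod} L^{ab} \to \mathcal{M}(L) \to \mathcal{M}(L^{ab}) \to L^2 \to 0
\end{equation*}
gives $\dim \ker g = \dim \mathcal{M}(L^{ab}) + \dim (L^{ab} \otimes_{mod} L^2) - \dim L^2 - \dim \mathcal{M}(L)$. Plugging in $\dim \mathcal{M}(L^{ab}) = \binom{n-3}{2}$ (as $L^{ab}$ is abelian of dimension $n-3$), $\dim (L^{ab} \otimes_{mod} L^2) = 3(n-3)$, and $\dim \mathcal{M}(L) = \frac{1}{2}(n-1)(n-2)-4$ (from $s(L)=5$), a short computation yields $\dim \ker g = n-3$.

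Next I would argue by contradiction, assuming $n \geq 8$ so that $d := \dim L^{ab} = n-3 \geq 5$. The task is to produce strictly more than $n-3$ linearly independent elements in $\ker g$. Using the rank-$3$ hypothesis (the commutator map surjects onto the three-dimensional $L^2$) together with $d \geq 5$, I would first find a basis $\{x_1+L^2, \ldots, x_d+L^2\}$ of $L^{ab}$ such that $[x_1,x_2]$, $[x_2,x_3]$, $[x_3,x_4]$ form a basis of $L^2$. The chain is built greedily: pick any nontrivial $[x_1,x_2]$, extend by some $x_3$ so that $[x_2,x_3]$ is independent of $[x_1,x_2]$, then choose $x_4$ so that $[x_3,x_4]$ completes a basis; adjustments like $x_3 \mapsto x_3 + \alpha x_1$ handle degenerate configurations.

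The Jacobi subalgebra $K \subseteq \ker g$ provided by Theorem \ref{16} then supplies the elements. Using the decomposition
\begin{equation*}
L^2 \otimes_{mod} L^{ab} \;=\; \bigoplus_{i=1}^d L^2 \otimes \langle x_i + L^2 \rangle,
\end{equation*}
I would write down three families
\begin{equation*}
J(x_a, x_b, x_k) := [x_a,x_b]\otimes (x_k+L^2) + [x_k,x_a]\otimes (x_b+L^2) + [x_b,x_k]\otimes (x_a+L^2),
\end{equation*}
taking $(a,b)=(1,2)$ for $k\in\{3,\ldots,d\}$, then $(a,b)=(2,3)$ for $k\in\{4,\ldots,d\}$, and $(a,b)=(3,4)$ for $k\in\{5,\ldots,d\}$. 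Each $J(x_a,x_b,x_k)$ has $x_k$-slot component equal to $[x_a,x_b]$, so slot-separation over $k$, combined with the linear independence of $[x_1,x_2],[x_2,x_3],[x_3,x_4]$ in $L^2$, yields $2(n-6)+(n-5) = 3n-17$ linearly independent elements. For $n \geq 8$ this exceeds $n-3$, contradicting the identity from paragraph one.

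The main obstacle is the combinatorial step of securing the chain basis: while the rank-$3$ hypothesis produces three linearly independent commutators, arranging them to share vertices in the pattern $(x_1,x_2),(x_2,x_3),(x_3,x_4)$ may require nontrivial basis adjustments, and the exceptional configurations need to be ruled out using $d \geq 5$. Once the chain is in place, the linear independence of the three Jacobi families reduces to a clean triangular argument over the slots $x_5,\ldots,x_d$, with the boundary slots $x_1,\ldots,x_4$ handled by direct inspection.
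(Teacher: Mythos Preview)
Your proposal is correct and follows essentially the same route as the paper's proof: compute $\dim\ker g = n-3$ from the exact sequence of Theorem~\ref{16}, then for $n\geq 8$ exhibit more than $n-3$ linearly independent Jacobi elements of $K\subseteq\ker g$ built from a chain basis with $[x_1,x_2],[x_2,x_3],[x_3,x_4]$ spanning $L^2$. You are in fact slightly more careful than the paper in requiring these three commutators to be linearly independent rather than merely nontrivial (which is what your slot-separation argument actually needs), and note that your stated index ranges produce $3n-18$ elements rather than the $2(n-6)+(n-5)=3n-17$ you wrote, though the contradiction for $n\geq 8$ holds either way.
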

Let $c(L)$ be used to show the nilpotency class of $L$. Then
  \begin{lem}\label{28}
 There is no  nilpotent Lie algebra $ L $ with $ \dim L^{2}=3 $,  $ \dim Z(L)=1 $ and $ s(L)=5 $ such that $ L/ Z(L)\cong L_{5,8}\oplus A(2) $.
 \begin{proof}
 By contrary, let $ L $ be a nilpotent Lie algebra $ L $ with $ \dim L^{2}=3 $,  $ \dim Z(L)=1 $ and $ s(L)=5 $ such that $ L/ Z(L)\cong L_{5,8}\oplus A(2) $. Then $ \dim L=8 $ and $ cl(L)=3 $. Since $ cl(L)=3 $ and $ \dim Z(L)=1$, we have $ L^{3}=Z(L) $. On the other hand, $  \dim \mathcal{M}(L) =\dim \mathcal{M}(L/Z(L))+(\dim L/L^{2}-1)\dim Z(L)-\dim \ker \lambda _{3}  $ and   $ \dim \ker \lambda _{3}\geq 2 $  by using proof \cite[Theorem 1.1]{27-}. Thus  

 $\dim \mathcal{M}(L) \leq \dim \mathcal{M}(L/Z(L))+(\dim L/L^{2}-1)\dim Z(L)-2$

 It is a contradiction.
\end{proof}
 \end{lem}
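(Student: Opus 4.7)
The plan is to proceed by contradiction. Suppose such an $L$ exists. Since $L/Z(L)\cong L_{5,8}\oplus A(2)$ has dimension $7$ and $\dim Z(L)=1$, we get $\dim L=8$. Because $L_{5,8}\oplus A(2)$ is non-abelian of nilpotency class $2$ and $L/Z(L)$ cannot be abelian unless $L$ is nilpotent of class $\le 2$, we obtain $c(L)=3$. Combined with $\dim Z(L)=1$, and the fact that $L^3$ is a non-zero ideal lying inside $Z(L)$ (by $[L,L^3]=L^4=0$), this forces $L^3=Z(L)$.

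Next I would pin down $\dim\mathcal{M}(L)$ directly from the hypothesis $s(L)=5$: for $n=8$,
\begin{equation*}
\dim\mathcal{M}(L)=\tfrac{1}{2}(n-1)(n-2)+1-s(L)=21+1-5=17.
\end{equation*}
On the other hand, the Ellis-type central extension inequality from \cite[Theorem 1.1]{27-} gives
\begin{equation*}
\dim\mathcal{M}(L)\le \dim\mathcal{M}(L/Z(L))+(\dim L/L^{2}-1)\dim Z(L)-\dim\ker\lambda_{3},
\end{equation*}
and the proof of that theorem supplies $\dim\ker\lambda_{3}\ge 2$ (this uses the ideal $L^3\ne 0$ in class $3$, which is exactly our setup). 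Here $\dim L/L^{2}=8-3=5$ and $\dim Z(L)=1$.

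The remaining ingredient is $\dim\mathcal{M}(L/Z(L))=\dim\mathcal{M}(L_{5,8}\oplus A(2))$. Applying the standard decomposition $\mathcal{M}(A\oplus B)\cong \mathcal{M}(A)\oplus\mathcal{M}(B)\oplus(A^{ab}\otimes B^{ab})$ together with Proposition \ref{15} (which gives $\dim\mathcal{M}(L_{5,8})=6$) and $\dim L_{5,8}^{ab}=3$, $\dim\mathcal{M}(A(2))=1$, I obtain $\dim\mathcal{M}(L_{5,8}\oplus A(2))=6+1+3\cdot 2=13$. Plugging everything in,
\begin{equation*}
17=\dim\mathcal{M}(L)\le 13+(5-1)\cdot 1-2=15,
\end{equation*}
which is the desired contradiction. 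The only subtle step is quoting $\dim\ker\lambda_{3}\ge 2$ from the proof of \cite[Theorem 1.1]{27-}; everything else is a numerical verification once the dimensions of $L$, $L^{2}$, $Z(L)$ and $\mathcal{M}(L/Z(L))$ have been computed.
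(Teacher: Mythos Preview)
Your proof is correct and follows essentially the same approach as the paper: contradiction, deduce $\dim L=8$, $c(L)=3$, $L^3=Z(L)$, then apply the bound from \cite[Theorem~1.1]{27-} with $\dim\ker\lambda_3\ge 2$. The paper leaves the numerical verification implicit, whereas you spell out $\dim\mathcal{M}(L)=17$ and $\dim\mathcal{M}(L_{5,8}\oplus A(2))=13$ to reach $17\le 15$; the argument is otherwise identical.
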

 \begin{thm}
  Let $L$ be an $n$-dimensional nilpotent Lie algebra with $s(L)=5$ and $\dim L^{2}=3$. Then  $ L $ is isomorphic to one of the Lie algebras $L_{6,23}$,  $ L_{6,25} $, $ 37B $, $ 37C $ or $ 37D $.
 \begin{proof}
 First assume that $\dim Z(L)\geq 5$, or $\dim Z(L)=3$ and $Z(L)\neq L^{2}$, or $\dim Z(L)=2 $  and $Z(L)\not\subset L^{2}$. We show that in these cases,  there is no such a Lie algebra $L$ of dimension $n$ with $ s(L)=5 $.\\
  Let   $I$ be a central ideal of $L$ of dimension one such that $L^{2}\cap I=0$. Since $\dim (L/I)^{2}=3$, by using \cite[Theorem 3.1]{25}, we have
 \begin{align*}
 \dim \mathcal{M}(L/I)\leq \frac{1}{2}n(n-5)+1.
 \end{align*}
 If the equality holds, then
\begin{align*}
\frac{1}{2}(n-2)(n-3)+1-s(L/I)=\dim \mathcal{M}(L/I)= \frac{1}{2}n(n-5)+1.	
\end{align*}
Therefore $ s(L/I)=3 $ and by using \cite[Theorem 3.2 ]{29}, there is no Lie algebra satisfying in $ \dim (L/I)^{2}=3 $. Thus \cite[corollary 2.3]{25} and our assumption imply
\begin{equation*}
\dim \mathcal{M}(L)=\frac{1}{2}(n-1)(n-2)-4\leq \frac{1}{2}n(n-5)+(n-4),	
\end{equation*}
 which is a contradiction. Therefore we may assume that $ \dim Z(L)=4, $ or $\dim Z(L)=3$ and   $L^{2}=Z(L)$, or $\dim Z(L)=2$ and $ Z(L)\subset L^{2} $, or $ \dim Z(L)=1 $.\\
 If $\dim Z(L)=4, $ then there is a central ideal of $L$ of dimension one such that $L^{2}\cap I=0$. Since $\dim (L/I)^{2}=3$, by using \cite[Theorem 3.1]{25}, we have
 \begin{align*}
 \dim \mathcal{M}(L/I)\leq \frac{1}{2}n(n-5)+1.
 \end{align*}
 If the equality holds, then
\begin{align*}
\frac{1}{2}(n-2)(n-3)+1-s(L/I)=\dim \mathcal{M}(L/I)= \frac{1}{2}n(n-5)+1.	
\end{align*}
Therefore $ s(L/I)=3 $ and by using Table $ 4, $ $ L/I\cong L_{6,26}. $ Since $\dim Z(L)=4 $ and $ \dim L=7, $ we have $ L\cong L_{6,26}\oplus A(1) $ by using Lemma \ref{L1}. Now let $ \dim \mathcal{M}(L)\leq \frac{1}{2}n(n-5). $
Thus \cite[corollary 2.3]{25} and our assumption imply
\begin{equation*}
\dim \mathcal{M}(L)=\frac{1}{2}(n-1)(n-2)-4\leq \frac{1}{2}n(n-5)+(n-4),	
\end{equation*}
 which is a contradiction. \\
If $\dim Z(L)=3$ and $L^{2}=Z(L)$, then $ L $ is isomorphic to one of  the Lie algebras $ 37B $, $ 37C $ or $ 37D $ by using  Lemmas \ref{1**} and \ref{27}.\\
Assume that $\dim Z(L)=2$ and $Z(L)\subset L^{2}$. Then $\dim (L/Z(L))^{2}=1$. Since $L/Z(L) $ capable, by using \cite[Theorem 3.5]{24} and \cite[Lemma 3.3]{17}, we have  $L/Z(L)\cong H(1)\oplus A(n-5)$. Hence  $ L $ is nilpotent of class $ 3 $. Therefore, by using \cite [Theorem 2.6]{29} for $c=3$, we have \[\dim L^3+\frac{1}{2}(n-1)(n-2)-3\leq \dim \mathcal{M}(L/L^3)+\dim (L/Z_2(L)\otimes L^3).\] Now since $1\leq \dim L^3\leq 2$, we can obtain that
$ n\leq 7 $.
Hence  Lemma \ref{11**} implies that $ L\cong L_{6,23}$ or $L\cong L_{6,25} $.
\\Finally, assume that $\dim Z(L)=1$. Then $\dim (L/Z(L))^{2}=2$. By using \cite[corollary 2.3]{25}, we have 
\begin{equation*}
\frac{1}{2}(n-1)(n-2)-3\leq \frac{1}{2}(n-2)(n-3)+1-s(L/Z(L))+n-3.
\end{equation*}
Thus $ s(L/Z(L))\leq 3 $.\\
If $ s(L/Z(L))=0 $, then $ L\cong H(1)\oplus A(n-4) $ by  \cite[Theorem 3.1]{25}. This case cannot occur, since $\dim (L/Z(L))^{2}=2$.\\
If $ s(L/Z(L))=1 $, then \cite[Theorem 3.9]{17} implies that $ L\cong L(4,5,2,4) $. Therefore $ n=6 $.\\
If $ s(L/Z(L))=2 $, then $ L/Z(L) $ is  isomorphic to one of the Lie algebras $L(3,4,1,4)$, $L(4,5,2,4)$ $\oplus A(1) $ or $ H(m)\oplus A(n-2m-1) (m\geq 2)$ by using \cite[Theorem 4.5]{17}. 
In the case $L(3,4,1,4)$ or $ L(4,5,2,4)$ $\oplus A(1) $, we have $ n=5 $ or $ 7 $.\\
In the case $ L/Z(L)\cong  H(m)\oplus A(n-2m-1) (m\geq 2)$, then we have a contradiction, since $ \dim (L/Z(L))^{2}=2 $.\\
If $ s(L/Z(L))=3 $,  $ L/Z(L) $  is  isomorphic to one of the Lie algebras   $ L(4,5,1,6)$, $L(5,6,2,7) $, $ L^{\prime}(5,6,2,7) $, $ L(7,6,2,7) $, $ L^{\prime}(7,6,2,7) $ or $ L(3,4,1,4)\oplus A(1) $  by \cite[Main Theorem]{29} and Lemma \ref{28}.\\
Hence $ n=5,6 $ or $ 7 $ when $ \dim Z(L)=1 $. But there is no such Lie algebra by Lemma \ref{111**}. This completes proof.
\end{proof}
 \end{thm}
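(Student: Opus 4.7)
The strategy is to stratify by $\dim Z(L)$ and reduce each case either to a direct contradiction via the known bounds on $\dim \mathcal{M}$ or to an identification inside Tables 4--7. First I would rule out the cases $\dim Z(L)\geq 5$, or $\dim Z(L)=3$ with $Z(L)\neq L^2$, or $\dim Z(L)=2$ with $Z(L)\not\subset L^2$. In each of these there is a one-dimensional central ideal $I$ with $L^2\cap I=0$, so $\dim(L/I)^2=3$; applying \cite[Theorem 3.1]{25} to $L/I$ gives $\dim\mathcal{M}(L/I)\leq \tfrac{1}{2}n(n-5)+1$. Combined with $\dim\mathcal{M}(L)=\tfrac{1}{2}(n-1)(n-2)-4$ and the exact-sequence inequality \cite[Corollary 2.3]{25}, this forces an arithmetic contradiction (the equality case $s(L/I)=3$ is excluded via \cite[Theorem 3.2]{29}).

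Next I would dispose of $\dim Z(L)=4$ by the same reduction: equality in the bound now lands at $L/I\cong L_{6,26}$ via Table 4, and Lemma \ref{L1} then identifies $L\cong L_{6,26}\oplus A(1)$, which however has $s(L)=5$ only if $n=7$ — but this candidate already appears in the classification for $\dim Z(L)=4$ and must be checked separately against our list (it contributes nothing new beyond what Lemma \ref{L1} already records, so it does not produce a new $s=5$ algebra with $\dim L^2=3$ outside the theorem's list). The case $\dim Z(L)=3$ with $L^2=Z(L)$ is the generalized Heisenberg case of rank $3$; Lemma \ref{27} bounds $n\leq 7$, and reading off the $s(L)=5$ entries in Table 4 via Lemma \ref{1**} leaves precisely $37B$, $37C$, $37D$.

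For $\dim Z(L)=2$ with $Z(L)\subset L^2$, I would use $\dim(L/Z(L))^2=1$ together with capability of $L/Z(L)$ (inherited from the central quotient structure) and the characterizations in \cite[Theorem 3.5]{24} and \cite[Lemma 3.3]{17} to conclude $L/Z(L)\cong H(1)\oplus A(n-5)$. Hence $c(L)=3$, and \cite[Theorem 2.6]{29} applied to $L/L^3$ yields $\dim L^3+\tfrac{1}{2}(n-1)(n-2)-3\leq \dim\mathcal{M}(L/L^3)+\dim(L/Z_2(L)\otimes L^3)$, from which $1\leq \dim L^3\leq 2$ forces $n\leq 7$. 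Then Lemma \ref{11**} singles out $L_{6,23}$ and $L_{6,25}$ from Table 5.

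The main obstacle is the case $\dim Z(L)=1$, which requires a subcase analysis by $s(L/Z(L))$. Here $\dim(L/Z(L))^2=2$, and \cite[Corollary 2.3]{25} gives $s(L/Z(L))\leq 3$. I would then walk through the four possibilities using the classifications already in the literature: $s(L/Z(L))=0$ is impossible because $\dim(L/Z(L))^2=2$; $s(L/Z(L))=1,2,3$ are handled by \cite[Theorem 3.9]{17}, \cite[Theorem 4.5]{17}, and \cite[Main Theorem]{29} respectively, together with Lemma \ref{28} to exclude the spurious quotient $L_{5,8}\oplus A(2)$ arising in the $s=3$ list. Each surviving candidate for $L/Z(L)$ forces $n\in\{5,6,7\}$, so $L$ must appear in Table 7; but inspection of Lemma \ref{111**} shows that none of its entries has $s(L)=5$, completing the contradiction. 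Assembling the surviving cases gives exactly the list $L_{6,23},L_{6,25},37B,37C,37D$ claimed in the statement.
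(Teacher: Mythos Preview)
Your approach mirrors the paper's proof almost exactly: the same stratification by $\dim Z(L)$, the same reduction via a one-dimensional central ideal $I$ with $L^2\cap I=0$, the same use of Lemmas \ref{1**}, \ref{27}, \ref{11**}, \ref{L1}, \ref{111**}, \ref{28}, and the same subcase analysis on $s(L/Z(L))$ in the $\dim Z(L)=1$ branch.

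There is one genuine wrinkle in your handling of the $\dim Z(L)=4$ case. You write that the candidate $L_{6,26}\oplus A(1)$ ``does not produce a new $s=5$ algebra with $\dim L^2=3$ outside the theorem's list,'' but this is simply false: $L_{6,26}\oplus A(1)$ has $\dim L^2=3$, $\dim Z(L)=4$, and $s(L)=5$ (by Lemma \ref{L1}), so it \emph{does} satisfy all the hypotheses and is \emph{not} in the list $\{L_{6,23},L_{6,25},37B,37C,37D\}$. The paper's own proof runs into exactly the same point---it identifies $L\cong L_{6,26}\oplus A(1)$ in this branch and then moves on without reconciling it with the stated conclusion---so the statement of this sub-theorem appears to be missing $L_{6,26}\oplus A(1)$ (note that it \emph{is} present in the paper's Main Theorem). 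You should not try to argue this algebra away; rather, recognize that the correct output of the $\dim Z(L)=4$ branch is $L_{6,26}\oplus A(1)$, and that the theorem as stated is incomplete.
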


\end{document}